\numberwithin{equation}{section}
\numberwithin{figure}{section}
\theoremstyle{plain}
\newtheorem{theorem}{Theorem}[section]
\newtheorem{lemma}[theorem]{Lemma}
\theoremstyle{definition}
\newtheorem{remark}[theorem]{Remark}
\newtheorem{definition}[theorem]{Definition}
\numberwithin{equation}{section}
\title[Density of  the level sets of the metric mean dimension  for homeomorphisms]{Density of  the level sets of the metric mean dimension   for homeomorphisms}
\author{Jeovanny M. Acevedo, Sergio Roma\~na, Raibel Arias}
\address{Jeovanny de Jesus Muentes Acevedo, Facultad de Ciencias B\'asicas,  Universidad Tecnol\'ogica de  Bol\'ivar, Cartagena de Indias, Colombia.}
\email{jmuentes@utb.edu.co}
\address{Sergio Augusto Roma\~na Ibarra,  Instituto de Matemática - Universidade Federal do Rio de Janeiro, CEP 21941-909,
Rio de Janeiro-Brasil and Department of Mathematics, Southern University of Science and Technology, Shenzhen, China. 
}
\email{sergiori@im.ufrj.br}
\address{Raibel de Jesus Arias Cantillo,    Universidade Federal de Maranh\~ao, Campus Balsas, Brasil}
\email{raibel.jac@ufma.br}
\begin{document}

\begin{abstract}
  Let $N$ be an $n$-dimensional compact riemannian manifold, with $n\geq 2$. In this paper, we prove that for any $\alpha\in [0,n]$, the set consisting of homeomorphisms on $N$ with   lower and upper metric mean dimensions equal to $\alpha$ is dense in $\text{Hom}(N)$. More generally, given $\alpha,\beta\in [0,n]$, with $\alpha\leq \beta$, we show the set consisting of homeomorphisms on $N$ with  lower metric mean dimension equal to $\alpha$ and upper  metric mean dimension equal to $\beta$ is dense in $\text{Hom}(N)$. {Furthermore, we also give a proof that the set of homeomorphisms with upper metric mean dimension equal to $n$ is residual in $\text{Hom}(N)$}. 
\end{abstract}

\keywords{mean dimension, metric mean dimension, topological entropy, genericity}

\subjclass[2020]{	 }

\date{\today}
\maketitle

\section{Introduction}

In   the late 1990's, M. Gromov   introduced the notion of mean topological dimension for a continuous map $ \phi: X\rightarrow X$, which is denoted by  $\text{mdim}(X,\phi)$, where $X$ is a compact topological space. The mean topological dimension is an invariant under conjugacy. Furthermore, this is a useful tool in order to characterize dynamical systems that can be embedded in   $(([0,1]^m)^{\mathbb{Z}},\sigma)$, where $\sigma$ is the left shift map on  $([0,1]^m)^{\mathbb{Z}} $ (see \cite{Simbolic}, \cite{Tsukamoto}). In \cite{lind}, Lindenstrauss and Weiss proved  that any homeomorphism $\phi: X\rightarrow X$  that can be  embedded in $(([0,1]^m)^{\mathbb{Z}},\sigma)$ must satisfy that
 $\text{mdim}(X,\phi)\leq m$. In \cite{Gutman},  Gutman and Tsukamoto showed that,   if $(X, \phi)$ is a minimal system with $\text{mdim}(X, \phi) <m/
2$, then we can embed it in $(([0, 1]^{m})^{\mathbb{Z}},\sigma)$. In \cite{lind4}, Lindenstrauss and Tsukamoto  presented an example of a minimal system with mean topological dimension equal to $m/2$ that cannot be
embedded into $(([0, 1]^{m})^{\mathbb{Z}},\sigma)$, which show  the constant $m/2$  is optimal. Some applications in information theory can be found in  \cite{lind3} and \cite{Lindestr}.

\medskip

The mean topological   dimension is difficult to calculate. Therefore,  Lindenstrauss and Weiss   in \cite{lind} introduced the notion of   metric mean dimension, which is an  upper bound for the mean topological  dimension. The metric mean dimension is a  metric-dependent quantity (this dependence is not continuous, as we can see in \cite{Aceyot}), therefore, it is not an invariant under topological conjugacy. 

\subsection{Metric mean dimension}\label{section1}
   
  Let $X$ be a compact metric space  endowed with a metric $d$. For any  
$n\in\mathbb{N}$, we define $d_n:X\times X\to [0,\infty)$ by
$$
d_n(x,y)=\max\{d(x,y),d(\phi(x),\phi(y)),\dots,d(\phi^{n-1}(x),\phi^{n-1}(y))\}.
$$   Fix $\varepsilon>0$.  We say that $A\subset X$ is an $(n,\phi,\varepsilon)$-\textit{separated} set
if $d_n(x,y)>\varepsilon$, for any two  distinct points  $x,y\in A$. We denote by $\text{sep}(n,\phi,\varepsilon)$ the maximal cardinality of any  $(n,\phi,\varepsilon)$-{separated}
subset of $X$. Set $$\text{sep}(\phi,\varepsilon)=\underset{n\to\infty}\limsup \frac{1}{n}\log \text{sep}(n,\phi,\varepsilon).$$

 We say that $E\subset X$ is an $(n,\phi,\varepsilon)$-\textit{spanning} set for $X$ if
for any $x\in X$ there exists $y\in E$ such  that $d_n(x,y)<\varepsilon$. Let $\text{span}(n,\phi,\varepsilon)$ be the minimum cardinality
of any $(n,\phi,\varepsilon)$-spanning subset of $X$.    Set $$\text{span}(\phi,\varepsilon)=\underset{n\to\infty}\limsup \frac{1}{n}\log \text{span}(n,\phi,\varepsilon).$$

 \begin{definition}
  The \emph{topological entropy} of $\phi:X\rightarrow X$   is defined by      
  \begin{equation*}\label{topent}h_{\text{top}}(\phi)=\lim _{\varepsilon\to0} \text{sep}(\phi,\varepsilon)=\lim_{\varepsilon\to0}  \text{span}(\phi,\varepsilon).
\end{equation*}
  \end{definition}

 \begin{definition}
  We define the \emph{lower  metric mean dimension}   and the \emph{upper metric mean dimension} of $(X,d,\phi )$ by
  \begin{align*}\label{metric-mean}
 \underline{\text{mdim}}_{\text{M}}(X,d,\phi)&=\liminf_{\varepsilon\to0} \frac{\text{sep}(\phi,\varepsilon)}{|\log \varepsilon|}= \liminf_{\varepsilon\to0} \frac{\text{span}(\phi,\varepsilon)}{|\log \varepsilon|}\\
 \text{and }\quad\overline{\text{mdim}}_{\text{M}}(X,d,\phi)&=\limsup_{\varepsilon\to0} \frac{\text{sep}(\phi,\varepsilon)}{|\log \varepsilon|}=\limsup_{\varepsilon\to0} \frac{\text{span}(\phi,\varepsilon)}{|\log \varepsilon|},
\end{align*}
respectively. \end{definition}
 
\begin{remark}\label{erfer}
 Throughout the paper, we will omit the underline and the overline  on the notations $\underline{\text{mdim}}_{\text{M}}$ and $\overline{\text{mdim}}_{\text{M}}$      when the result be valid for both cases, that is,  we  will use $\text{mdim}_{\text{M}}$ for the both cases. 
 \end{remark}

In recent years, the metric mean dimension has been the subject of multiple investigations, which can be verified in the bibliography of the present work. The purpose of this manuscript is to complete the research  started in   \cite{Carvalho},  \cite{VV} and \cite{JeoPMD}, concerning to the topological properties of the level sets   of the  metric mean dimension map. 
 
 \medskip 
 
In \cite{Carvalho}, Theorem C, the authors proved the set consisting of continuous maps $\phi:[0,1]\rightarrow [0,1]$ such that $\underline{\text{mdim}}_\text{M}([0,1],  |\cdot|,\phi)=\overline{\text{mdim}}_\text{M}([0,1],  |\cdot|,\phi)=\alpha$, for a fixed $\alpha\in[0,1]$, is dense in $C^{0}([0,1])$ (see also  \cite{JeoPMD}, Theorem 4.1). Furthermore, in Theorem A they showed if $N$ is an $n$-dimensional  compact riemannian manifold, with $n\geq 2$, and  riemannian metric $d$,    the set  of homeomorphisms $\phi:N\rightarrow N$ such that $\overline{\text{mdim}}_\text{M}(N,  d,\phi)=n$ contains a residual set in $\text{Hom}(N)$ (see a particular case of this fact in \cite{VV},  Proposition 10).  Next, for any $n\geq 1$,   the set consisting of continuous maps $\phi:N\rightarrow N$ such that $$\underline{\text{mdim}}_\text{M}(N,  d,\phi)=\overline{\text{mdim}}_\text{M}(N,  d,\phi)=\alpha$$  is dense  in $C^{0}(N)$, for a fixed   $\alpha\in [0,n]$ (see \cite{JeoPMD}, Theorem 4.5, and \cite{JeoPMD2}, Theorem 3.6). 
\medskip

We consider the next level sets of the metric mean dimension for homeomorphisms:

  \begin{definition}
 For $\alpha,\beta\in [0,n]$, with $\alpha \leq \beta$, we will set $$ {H}_{\alpha}^{\beta}(N)=\{\phi\in \text{Hom}(N):  \underline{\text{mdim}}_\text{M}(N,  d,\phi)=\alpha \, \, \text{and} \, \, \overline{\text{mdim}}_\text{M}(N,  d,\phi)=\beta\}.$$
If $\alpha=\beta$, we denote ${H}_{\alpha}^{\alpha}(N)$ by ${H}_{\alpha}(N)$.
 \end{definition}

If $n=1$, then ${H}_{\alpha}^{\beta}(N)=\emptyset$ for $0<\alpha\leq\beta\leq 1$. This is due to the fact that any homeomorphism on a one-dimensional compact Riemannian manifold has zero topological entropy, leading to zero metric mean dimension. Our initial result is presented in the following theorem, proved in \cite{JeoPMD2}, Theorem 3.6, specifically for continuous maps on the interval.  

\begin{theorem}\label{density_tipo1} Let $n\geq 2$. 
 For any  $\alpha,\beta\in [0,n]$, with $\alpha\leq \beta$, the set  ${H}_{\alpha}^{\beta}(N)$ is dense in $\emph{Hom}(N)$.
\end{theorem}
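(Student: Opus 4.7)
The plan is to localize the construction. Given $\psi \in \text{Hom}(N)$ and $\delta > 0$, I will produce $\phi \in H_\alpha^\beta(N)$ with $d_{C^0}(\phi,\psi)<\delta$ by modifying $\psi$ inside a small trapping ball. By a standard local perturbation argument, I first perturb $\psi$ in $C^0$-distance less than $\delta/2$ to a homeomorphism $\psi_0$ possessing an attracting fixed point $p$ with trapping closed ball $\overline{B} = \overline{B(p,r)}$, where $r < \delta/4$ and $\psi_0(\overline{B}) \subset \text{int}(B)$; this amounts to inserting a small attracting bump near an arbitrarily chosen point. I then replace $\psi_0|_B$ by a carefully chosen homeomorphism $\varphi$ which agrees with $\psi_0$ on $\partial B$ (after conjugation, fixes $\partial B$ pointwise), sends $\overline{B}$ into $\overline{B}$, and realizes the prescribed oscillating complexity. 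The resulting glued map $\phi$ is a homeomorphism of $N$ with $d_{C^0}(\phi,\psi) \leq \delta/2+\text{diam}(B) < \delta$.

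The heart of the argument is the construction of $\varphi$ on the closed $n$-ball, with
\[
\underline{\text{mdim}}_{\text{M}}(\overline{B},d,\varphi) = \alpha, \qquad \overline{\text{mdim}}_{\text{M}}(\overline{B},d,\varphi) = \beta.
\]
Following the philosophy of \cite{Carvalho} and \cite{JeoPMD}, but adapted to the invertible higher-dimensional setting, I would choose two very lacunary sequences of scales $\varepsilon_k \downarrow 0$ (to realize the $\liminf$ at level $\alpha$) and $\eta_k \downarrow 0$ (to realize the $\limsup$ at level $\beta$). In pairwise disjoint subballs of $B$, I plant horseshoe-like $n$-dimensional invariant Cantor sets whose scale-$\varepsilon_k$ (resp.\ scale-$\eta_k$) entropy is approximately $\alpha|\log\varepsilon_k|$ (resp.\ $\beta|\log\eta_k|$), so that $\text{sep}(\varphi,\varepsilon_k)/|\log\varepsilon_k| \to \alpha$ and $\text{sep}(\varphi,\eta_k)/|\log\eta_k| \to \beta$. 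This is where $n \geq 2$ is used decisively: in dimension at least two one can realize horseshoes of any prescribed dimensional growth rate up to $n$ inside an arbitrarily small cube. Outside these horseshoe boxes, $\varphi$ is defined to be the identity or a radial contraction toward the origin, so that no unintended complexity is introduced and the boundary condition is automatic.

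Once $\varphi$ is in place, gluing produces $\phi$. The lower bounds $\underline{\text{mdim}}_{\text{M}}(\phi) \geq \alpha$ and $\overline{\text{mdim}}_{\text{M}}(\phi) \geq \beta$ are essentially automatic: the planted horseshoes are invariant under $\phi$, and their internal $(n,\varphi,\varepsilon)$-separated sets are also $(n,\phi,\varepsilon)$-separated in the ambient metric of $N$. For the upper bounds, I must control the scale-$\varepsilon$ entropy of $\phi$ at \emph{every} small $\varepsilon$, not just at the chosen $\varepsilon_k, \eta_k$. Because the two sequences are chosen sufficiently lacunary, at each scale $\varepsilon$ only finitely many horseshoes are ``visible'', and their combined contribution is dominated by the one closest in scale; the rest of $N$ carries negligible dynamics (it is contracted into $p$). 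This yields $\overline{\text{mdim}}_{\text{M}}(\phi) \leq \beta$ and $\underline{\text{mdim}}_{\text{M}}(\phi) \leq \alpha$.

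The main obstacle is precisely this simultaneous control of $\liminf$ and $\limsup$: the $\beta$-horseshoes that force $\overline{\text{mdim}}_{\text{M}}(\phi) \geq \beta$ must not inflate $\underline{\text{mdim}}_{\text{M}}(\phi)$ above $\alpha$, and the $\alpha$-horseshoes must not prevent the $\limsup$ from reaching $\beta$. Achieving a single value $\alpha=\beta$ (as in \cite{JeoPMD}, Theorem 4.5) or merely the residual statement $\beta=n$ is substantially easier, since one can afford to be liberal about the other quantity. The new technical content here is a sharp geometric/combinatorial estimate on Bowen-ball counts at intermediate scales, together with a careful choice of the relative sizes of the horseshoes versus their supporting balls, so that the matching upper bounds go through uniformly in $\varepsilon$.
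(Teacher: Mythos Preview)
Your overall architecture---localize, plant a model homeomorphism on an $n$-cube with prescribed lower and upper metric mean dimension built from a sequence of horseshoes, then transfer via charts---is exactly the paper's strategy (Lemmas~\ref{lemma22} and~\ref{med1} build the cube model, and Theorem~\ref{densitypositivemanifold} does the gluing). However, the very first step of your reduction contains a genuine gap.

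You assert that an arbitrary $\psi\in\text{Hom}(N)$ can be $C^{0}$-perturbed by less than $\delta/2$ to a homeomorphism with an \emph{attracting fixed point}, ``inserting a small attracting bump near an arbitrarily chosen point''. This is false in general: if $N=\mathbb{T}^{n}$ and $\psi(x)=x+v$ is a translation with $\Vert v\Vert=0.3$, then for any $\phi$ with a fixed point $p$ one has $\hat d(\psi,\phi)\geq d(\psi(p),\phi(p))=\Vert v\Vert$, so no homeomorphism within $C^{0}$-distance $0.1$ of $\psi$ has any fixed point at all. The obstruction is global: you cannot manufacture a fixed point by a local bump unless the map already nearly fixes some point.

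The paper avoids this by invoking the $C^{0}$-density of $C^{r}$-diffeomorphisms possessing a \emph{periodic} point (Artin--Mazur \cite{Artin}, Hurley \cite{Hurley}); for such $\psi$ with a $k$-periodic point $p$, one cannot simply work in a single trapping ball, so the gluing is carried out simultaneously in $k$ small cubes around the orbit $\{p,\psi(p),\dots,\psi^{k-1}(p)\}$, with the inserted map cycling the cubes and returning to $\phi_{\alpha,\beta}^{k}$ after one full period. This is exactly the content of Theorem~\ref{densitypositivemanifold}, and it replaces your single trapping ball by a chain of charts. Once you make this correction, the remainder of your outline---two interlaced lacunary families of horseshoes to pin the $\liminf$ at $\alpha$ and the $\limsup$ at $\beta$, with the matching upper bounds coming from the fact that at each scale only the finitely many already-built horseshoes contribute---coincides with the paper's Lemmas~\ref{lemma22} and~\ref{med1}. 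Your identification of the main difficulty (preventing the $\beta$-horseshoes from inflating the $\liminf$) is accurate and is handled in the paper by the explicit size relation $|E_{k}|=B\,3^{-kr}$ together with the choice of which $E_{k}$ carry nontrivial horseshoes.
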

    Using the techniques employed to prove Theorem \ref{density_tipo1}, we will provide a new   proof of Theorem A in \cite{Carvalho}, that is:  
\begin{theorem}\label{teoresidual*}
The set $\overline{H}(N)=\{\phi\in \emph{Hom}(N):\overline{\emph{mdim}}_{\emph{M}}(N,d,\phi)=n\}$ contains a residual set in $\emph{Hom}(N)$. 
\end{theorem}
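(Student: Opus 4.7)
My plan is to realize $\overline{H}(N)$ as a superset of a countable intersection of open dense subsets of $\text{Hom}(N)$. The starting observation is the classical bound $\overline{\text{mdim}}_{\text{M}}(N, d, \phi) \leq n$, valid for every homeomorphism, because the upper box dimension of $N$ equals $n$ and this controls the growth of $\text{sep}(n_0, \phi, \varepsilon)$ uniformly in $\phi$. Consequently $\overline{H}(N) = \{\phi : \overline{\text{mdim}}_{\text{M}}(\phi) \geq n\}$, so it suffices to exhibit, for each $k \geq 1$, a dense $G_\delta$ consisting of $\phi$ with $\overline{\text{mdim}}_{\text{M}}(\phi) > n - 1/k$.

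For each $k, m \geq 1$, I would define
$$O_{k,m} = \bigl\{\phi \in \text{Hom}(N) : \exists \varepsilon \in (0, 1/m),\; \exists n_0 \geq m,\; \log \text{sep}(n_0, \phi, \varepsilon) > n_0 (n - 1/k) |\log \varepsilon|\bigr\}.$$
Openness follows from the lower semi-continuity of $\phi \mapsto \text{sep}(n_0, \phi, \varepsilon)$ at fixed $(n_0, \varepsilon)$: an $(n_0, \phi, \varepsilon)$-separated configuration of finite cardinality survives small $C^0$-perturbations, up to an arbitrarily small loss of the separation constant, and the strict inequality absorbs this loss. Density is immediate from Theorem \ref{density_tipo1}: any $\phi$ admits $C^0$-close approximants $\psi \in H_n^n(N)$, and $\overline{\text{mdim}}_{\text{M}}(\psi) = n > n - 1/k$ means, by definition of the two nested limsups, that arbitrarily small $\varepsilon$ and arbitrarily large $n_0$ can be chosen to fulfill the required inequality.

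Baire's theorem then delivers residuality of $R = \bigcap_{k,m \geq 1} O_{k,m}$ in $\text{Hom}(N)$. To finish, I have to verify $R \subseteq \overline{H}(N)$: given $\phi \in R$, for each $k$, letting $m \to \infty$ yields pairs $(\varepsilon_m, n_{0,m})$ with $\varepsilon_m \to 0$, $n_{0,m} \to \infty$, and $\log \text{sep}(n_{0,m}, \phi, \varepsilon_m) > n_{0,m}(n - 1/k)|\log \varepsilon_m|$. From these witnesses one extracts $\overline{\text{mdim}}_{\text{M}}(\phi) \geq n - 1/k$, and intersecting over $k$ gives the claim.

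This last implication is the main obstacle: a single large value of $\frac{\log \text{sep}(n_0, \phi, \varepsilon)}{n_0}$ does not in principle force the $\limsup$ in $n_0$ at that $\varepsilon$ to be large, so one cannot directly deduce $\text{sep}(\phi, \varepsilon_m)/|\log \varepsilon_m| > n - 1/k$. To bypass this I would either refine the witnesses in $O_{k,m}$ by demanding they come from genuine topological horseshoes---which by construction satisfy $\text{sep}(n_0, \phi, \varepsilon) \geq K^{n_0}$ for \emph{all} $n_0$, so that the $\limsup$ is realised by a true limit---or invoke the nested-horseshoe structures produced in the proof of Theorem \ref{density_tipo1}, which supply such robust witnesses for free together with their $C^0$-persistence. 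Either route reduces the final implication to a straightforward extraction and highlights that the delicate point is the interplay between the two iterated limsups defining $\overline{\text{mdim}}_{\text{M}}$ and the single-pair witnesses naturally available from openness arguments.
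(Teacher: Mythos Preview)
Your diagnosis of the obstacle is exactly right, and it is a genuine gap in the scheme as first written: membership in $R=\bigcap_{k,m}O_{k,m}$ produces, for each $k$, only a diagonal sequence $(\varepsilon_m,n_{0,m})$ with $\tfrac{1}{n_{0,m}}\log\text{sep}(n_{0,m},\phi,\varepsilon_m)>(n-1/k)|\log\varepsilon_m|$, and a single large term at one $n_0$ says nothing about $\text{sep}(\phi,\varepsilon_m)=\limsup_{n_0}\tfrac{1}{n_0}\log\text{sep}(n_0,\phi,\varepsilon_m)$. No soft argument repairs this; one needs a witness that forces $\text{sep}(n_0,\phi,\varepsilon)\geq K^{n_0}$ for \emph{all} $n_0$ simultaneously, and that is precisely what a horseshoe provides.

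Once you commit to the horseshoe route, your argument coincides with the paper's. The paper replaces your numerical condition in $O_{k,m}$ by the condition ``$\phi$ admits an $n$-dimensional \emph{strong} $(\epsilon,(2k+1)^{n-1})$-horseshoe'' (Definition~\ref{3ddimensionalhorse}); the word \emph{strong} means the legs satisfy strict inclusions $H_{i_1,\dots,i_{n-1}}\subseteq\phi(V_l)^\circ$, and this strictness---rather than lower semicontinuity of $\text{sep}(n_0,\phi,\varepsilon)$---is what makes the condition $C^0$-open. Density comes from the same local perturbation used for Theorem~\ref{density_tipo1} (so your appeal to that theorem is legitimate), and for $\varphi=\phi^{2}$ in the residual intersection the horseshoe at level $k$ yields $\text{sep}(m,\varphi,\varepsilon_k)\geq 3^{k\,i_k\,n\,m}$ for \emph{every} $m$, so the inner $\limsup$ is an honest limit and one reads off $\overline{\text{mdim}}_{\text{M}}\geq n$ directly. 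In short: the gap you flagged is real, your proposed fix is correct, and carrying it out \emph{is} the paper's proof; the purely numerical $O_{k,m}$ scheme cannot be salvaged as stated.
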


Yano, in \cite{Yano},  defined a  kind of horseshoe in order to prove the set consisting of homeomorphisms $\phi:N\rightarrow N$   with infinite entropy is generic in $\text{Hom}(N)$, where $N$ is an $n$ dimensional compact manifold, with $n\geq 2$.  
If we want to construct a continuous map  with infinite entropy we can consider an infinite sequence of horseshoes such that the number of legs   is unbounded. For the metric mean dimension case, in \cite{Carvalho} and \cite{VV} the authors used horseshoe in order to prove the set consisting of homeomorphisms $\phi:N\rightarrow N$   with upper metric mean dimension equal to $n$ (which is the maximal value of the metric mean dimension for any map defined on $N$)  is generic in $\text{Hom}(N)$. To get metric mean dimension equal to $n$ we must construct a sequence of horseshoes such that the number of legs increases very quickly compared to the   decrease in their diameters.

\medskip 

Estimating the precise value of the metric mean dimension for a homeomorphism, and hence to obtain a homeomorphism with  metric mean dimension equal to  a fixed $\alpha\in(0,n)$, is harder and not trivial sake. We need to establish a precise  relation between  the sizes of the horseshoes together with the number of appropriated legs to control the metric mean dimension. This is our main tool (see Lemma \ref{lemma22}). 

\medskip 

 The paper is organized as follows: In Section \ref{section55}, we will construct  homeomorphisms, defined  on an $n$-cube, with metric mean dimension equal to $\alpha$, for a fixed $\alpha\in (0,n]$. Furthermore, 
given $\alpha,\beta\in[0,n]$, with $\alpha\leq \beta$, we will construct  examples of homeomorphisms $\phi:[0,1]^{n}\rightarrow [0,1]^{n}$ such that $       \underline{\text{mdim}}_{\text{M}}([0,1]^{n} ,d,\phi)=\alpha$ and $  \overline{\text{mdim}}_{\text{M}}([0,1]^{n} ,d,\phi)=\beta.$     In Section \ref{secvvnfnrk}, we will prove   Theorem \ref{density_tipo1}. Finally, in Section \ref{Section6}, we will show  Theorem \ref{teoresidual*}.

  \section{Homeomorphisms on  the {n}-cube with positive metric mean dimension}\label{section55}

Let $n\geq 2$. Given $\alpha, \beta \in [0,n]$, $\alpha\leq \beta$, in this section we will construct  a homeomorphism $\phi_{\alpha, \beta}$, defined on an $n$-cube, with lower metric mean dimension equal to $\alpha$ and upper metric mean dimension equal to $\beta$ (see Lemma \ref{med1}). This construction will be the main tool to prove the Theorem \ref{density_tipo1}, since if a homeomorphism present a periodic orbit, then we can  glue, in the  $C^0$-topology, along this orbit the dynamic of $\phi_{\alpha,\beta}$. 

\medskip

{The construction of $\phi_{\alpha, \beta}$ requires a special type of horseshoe. So, let us present  the first definition.}

 \begin{definition}[$n$-dimensional   $(2k+1)^{n-1}$-horseshoe]\label{3dimensionalhorse}  
 Fix $n\geq 2$. Take $E=[a,b]^{n}$ and set $|E|=b-a$. For a fixed natural number $k>1$,   take the sequence    $a=t_0<t_{1}<\cdots<t_{4k}<t_{4k+1}=b $, with $|t_{i}-t_{i-1}|=\frac{b-a}{4k+1}$,
 and  consider   $$H_{i_{1},i_{2},\dots,i_{n-1}}=[a,b]\times[t_{i_{1}-1},t_{i_{1}}]\times  \cdots \times[t_{i_{n-1}-1}, t_{i_{n-1}}],\quad \text{for }i_{j}\in\{1,\dots, 4k+1\}.$$   Take $a=s_0<s_{1}<\cdots<s_{2(2k+1)^{n-1}-2}<s_{2(2k+1)^{n-1}-1}=b $, with $|s_{i}-s_{i-1}|=\frac{b-a}{2(2k+1)^{n-1}-1}$, 
 and consider     $$ V_{l}=[s_{l-1}, s_{l}]\times [a,b]^{n-1},\quad \text{for } l=1, 2, \dots, 2(2k+1)^{n-1}-1.$$ 
  We say that $E\subseteq A\subseteq \mathbb{R}^{n}$ is an     $n$-\textit{dimensional       $(2k+1)^{n-1}$-horseshoe} for a homeomorphism $\phi:A\rightarrow A$ if:   
\begin{itemize} 
\item $\phi(a,a,\dots,a,b)=(a,a,\dots,a,b)$ and $\phi(b,b,\dots,b,a)=(b,b,\dots,b,a)$;
\item For  any $H_{i_{1},i_{2},\dots,i_{n-1}}$, with $i_{j}\in\{1,3,\dots, 4k+1\}$, there exists  some   $l\in\{1,3,\dots, 2(2k+1)^{n-1}-1\}$ with $$\phi(V_{l})=H_{i_{1},i_{2},\dots,i_{n-1}}  \quad\text{and}\quad \phi|_{V_{l}}:V_{l}\rightarrow H_{i_{1},i_{2},\dots,i_{n-1}} \quad \text{is linear}.$$ 
\item For any $l=2,4,\dots, 2(2k+1)^{n-1}-2$,  $\phi(V_{l})\subseteq A\setminus E$.\end{itemize} 
In that case, the sets $H_{i_{1},i_{2},\dots,i_{n-1}}$, with $i_{j}\in\{1,3,\dots, 4k+1\}$ are called the \textit{legs}
 of the horseshoe.   \end{definition}  \begin{figure}[hbtp]
 \centering
 \subfigure[$V_{i}$, $i=1,\dots,9$]{\includegraphics[scale=.28]{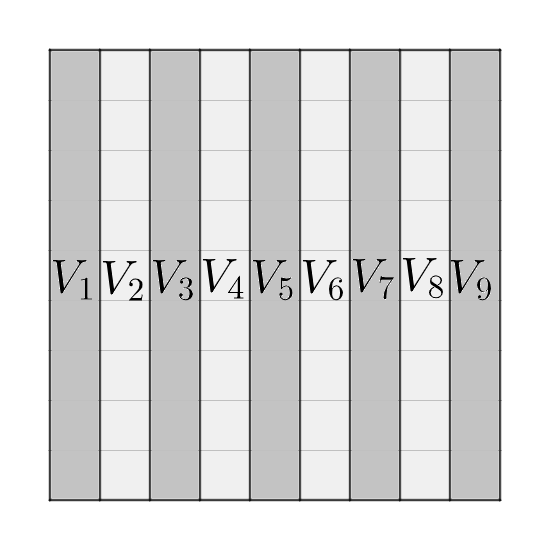}}\quad  \subfigure[$H_{i}$, $i=1,3,\dots,9$]{\includegraphics[scale=.28]{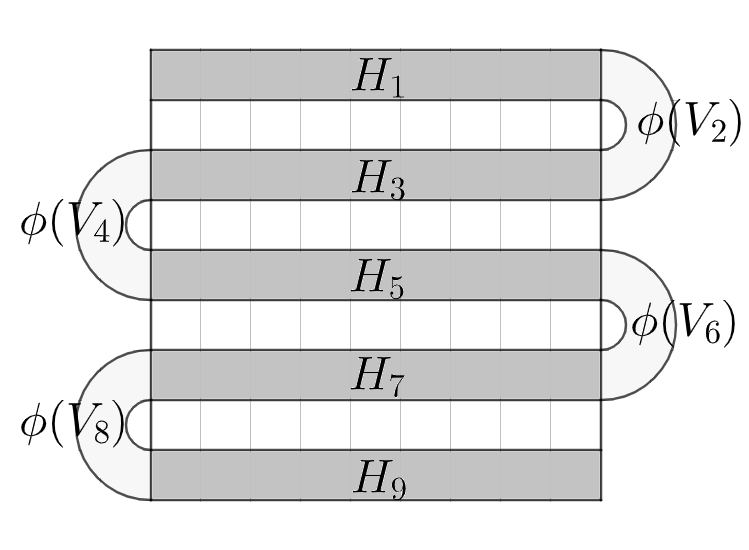}}
 \caption{2-dimensional $5$-horseshoe}\label{l}
 \end{figure}
   
\begin{figure}[hbtp]
 \centering
  \subfigure[$V_{i}$, $i=1,\dots,17$]{\includegraphics[scale=.28]{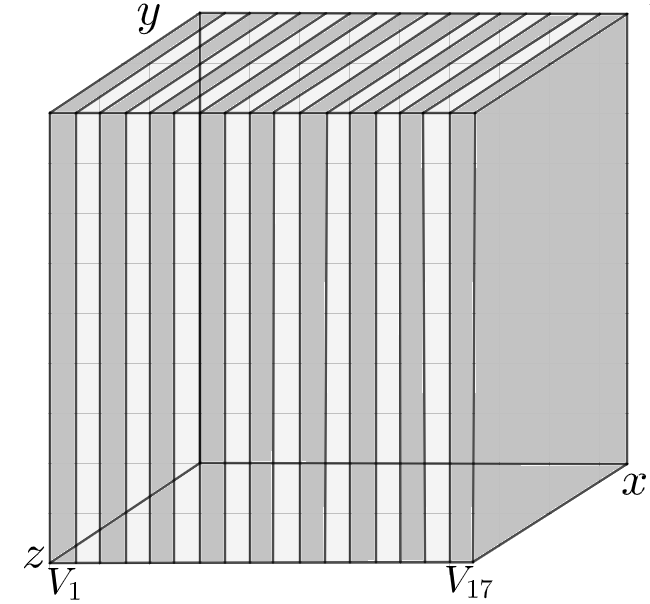}}\quad
 \subfigure[$H_{i,j}$, $i,j\in\{1,3,5\}$]{\includegraphics[scale=.28]{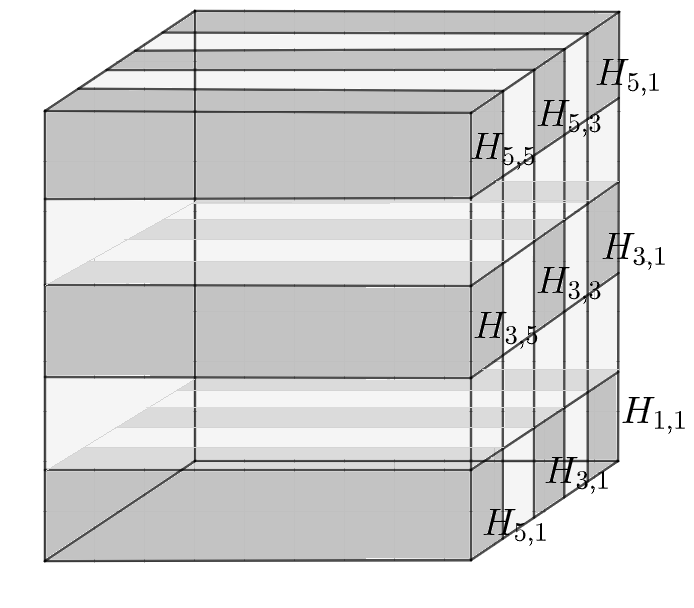} }
 \caption{3-dimensional  9-horseshoe}\label{mm}
 \end{figure}

 In Figure \ref{l} we present an example of a 2-dimensional 5-horseshoe. In that case, $k=2$, we have $2(2k+1)-1$ divisions  $V_{i}$ of $[a,b]^{2}$, and $2k+1$ legs $H_{i}$, for $i=1,3,5,7,9$. In Figure \ref{mm} we have a 3-dimensional 9-horseshoe (we only  show $\phi(E)\cap E$ in that figure). In that case, $k=1$, we have $2(2k+1)^{2}-1$ divisions  $V_{i}$ of $[a,b]^{3}$, and $(2k+1)^{2}$ legs $H_{i,j}$, for $i,j=1,3,5$.
  
  \medskip 
  
  Note if $E=[a,b]^{n}$ is an $n$-dimensional $(2k+1)^{n-1}$-horseshoe for $\phi$, then $$\phi^{2}(V_{l_{1}}\cap H_{i_{1}^{(1)},\dots,i_{n-1}^{(1)}})\cap V_{l_{2}}\cap H_{i_{1}^{(2)},\dots,i_{n-1}^{(2)}}\neq \emptyset ,$$ for any $l_{1},l_{2}\in \{1,3\dots, 2(2k+1)^{n-1}\}$ and $i_{1}^{(1)},\dots,i_{n-1}^{(1)},i_{1}^{(2)},\dots,i_{n-1}^{(2)}\in \{1,3,\dots, 4k+1\}$ (see Figure \ref{lw}). 
  \begin{figure}[hbtp]
 \centering
 \subfigure[$V_{i}$,  $H_{i}$]{\includegraphics[scale=.22]{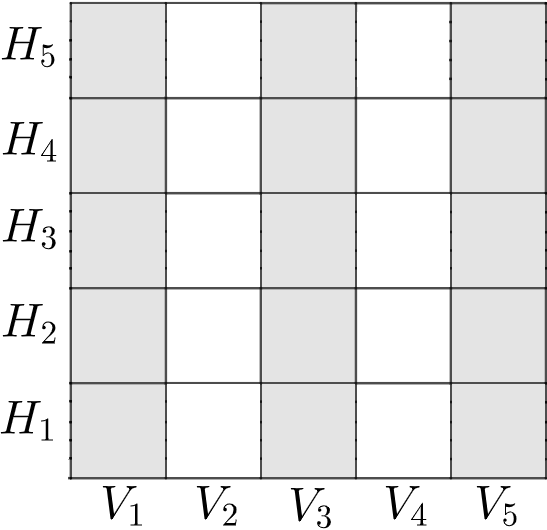}}   \subfigure[$\phi(E)$]{\includegraphics[scale=.24]{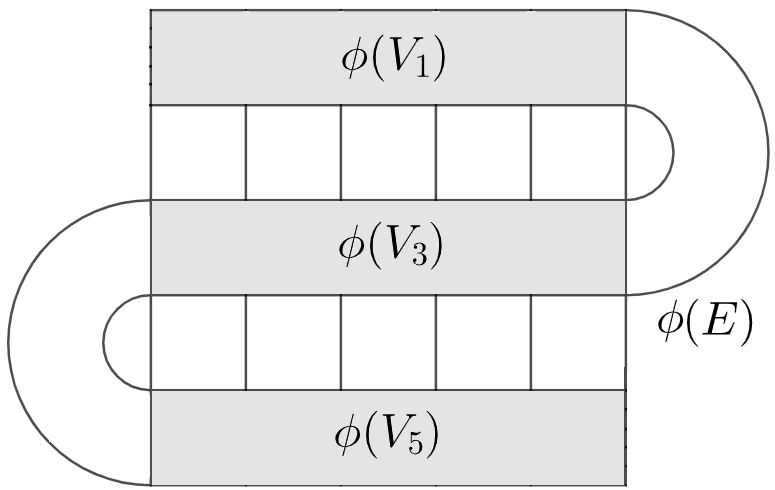}}   \subfigure[$\phi^{2}(E)$]{\includegraphics[scale=.24]{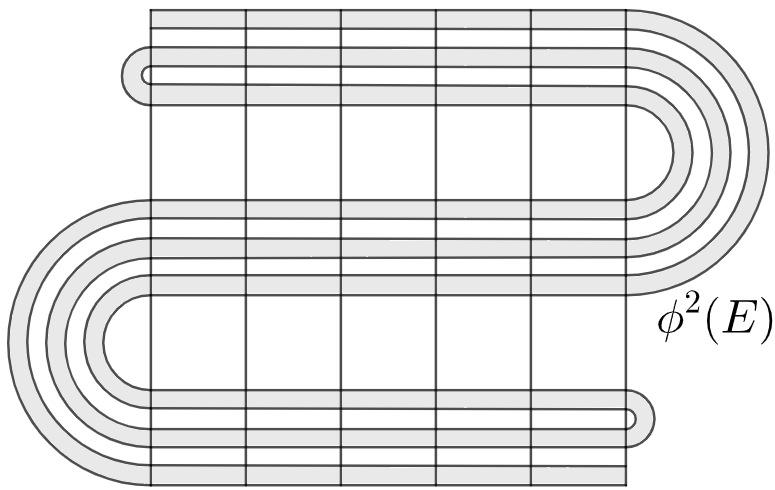}}
 \caption{$E$ is the first square, is a 2-dimensional 3-horseshoe for $\phi$}\label{lw}
 \end{figure}
  
  \medskip

  The assumptions $\phi(a,a,\dots,a,b)=(a,a,\dots,a,b)$ and $\phi(b,b,\dots,b,a)=(b,b,\dots,b,a)$   is just to be able to extend $\phi$ to a homeomorphism $\tilde{\phi}:E^{\prime}\rightarrow E^{\prime}$, where $E^{\prime}$ is an $n$-cube with $E\subset E^{\prime}$, such that $\tilde{\phi}|_{\partial E^{\prime}}\equiv Id$ and $h_{\text{top}}(\phi^{\prime})=h_{\text{top}}(\phi|_{E\cap \phi(E )})$ (see Lemma \ref{lemma2}). Thus, our strategy to prove the Theorem \ref{density_tipo1} will be to use local charts to glue such horseshoes along of periodic orbits of a homeomorphism.

\begin{lemma}\label{lemma2} Let $E, E^{\prime}\subseteq \mathbb{R}^n$ be closed $n$-cubes  with $E \subsetneqq (E^{\prime})^{\circ}$ and fix $k\in\mathbb{N}$. There   exists a homeomorphism $\phi: E^{\prime}\longrightarrow E^{\prime}$ such that $\phi|_{E}:  E \longrightarrow E$ is an $n$-dimensional $(2k+1)$-horseshoe, $\phi|_{\partial E^{\prime}}\equiv Id$ and $h_{\emph{top}}(\phi)=h_{\emph{top}}(\phi|_{E\cap \phi(E )})$.\end{lemma}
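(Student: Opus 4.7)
The plan is to build $\phi$ in three layers: prescribe the horseshoe dynamics on the odd vertical strips of $E$, define ``bridge'' images for the even strips inside the annular region $E^\prime\setminus E^\circ$, and extend by the identity near $\partial E^\prime$ via a Schoenflies/isotopy-extension argument. On the odd strips I fix a bijection $\sigma$ between the $(2k+1)^{n-1}$ odd indices $l\in\{1,3,\dots,2(2k+1)^{n-1}-1\}$ and the $(2k+1)^{n-1}$ tuples $(i_1,\dots,i_{n-1})$ of odd entries from $\{1,\dots,4k+1\}$, chosen so that the two special corners $(a,\dots,a,b)$ and $(b,\dots,b,a)$, which lie at opposite ends of $V_1\cap H_{1,\dots,1}$ and $V_{2(2k+1)^{n-1}-1}\cap H_{4k+1,\dots,4k+1}$, become fixed points of the corresponding affine maps. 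Declare $\phi|_{V_l}$ to be the unique orientation-preserving affine map from $V_l$ onto $H_{\sigma(l)}$ required by Definition \ref{3dimensionalhorse}, with the linear map chosen so that the two designated corners are fixed.

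For each even $V_l$, declare $\phi(V_l)$ to be a thin slab contained in $E^\prime\setminus E$ joining the image of the face $V_{l-1}\cap V_l$ to that of $V_l\cap V_{l+1}$, and spread the $(2k+1)^{n-1}-1$ bridges disjointly through the annulus (possible because $E^\prime\setminus E^\circ$ is connected and $n\geq 2$). The resulting partial map on $E\cup\bigcup_{l\text{ even}}V_l$ is a continuous injection, and together with the prescription $\phi\equiv\mathrm{Id}$ on $\partial E^\prime$ there remains to fill in an annular region inside $E^\prime\setminus E^\circ$. By the annulus/Schoenflies theorem this region is homeomorphic to $S^{n-1}\times[0,1]$; its inner boundary map is orientation-preserving and hence isotopic to the identity in $S^{n-1}$ (using $n\geq 2$), so the isotopy-extension theorem produces a homeomorphism of $E^\prime$ that agrees with the previously defined $\phi$ on $E$, routes the bridges through the annulus, and restricts to the identity on $\partial E^\prime$.

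For the entropy equality let $\Lambda=\bigcap_{m\in\mathbb{Z}}\phi^{-m}(E)$ be the maximal invariant subset of $E$; by the horseshoe construction $\phi|_\Lambda$ is conjugate to the full shift on $(2k+1)^{n-1}$ symbols and is contained in $E\cap\phi(E)$. Any point of $E^\prime\setminus\Lambda$ either sits in the region where $\phi$ is the identity, is one of the two corner fixed points, or escapes $E$ under some iterate and then wanders through the bridges without returning; hence the non-wandering set satisfies $\Omega(\phi)\subset\Lambda\cup\mathrm{Fix}(\phi|_{E^\prime\setminus E^\circ})$. Applying the classical identity $h_{\text{top}}(\phi)=h_{\text{top}}(\phi|_{\Omega(\phi)})$ together with the vanishing entropy of the identity pieces yields $h_{\text{top}}(\phi)=h_{\text{top}}(\phi|_\Lambda)=h_{\text{top}}(\phi|_{E\cap\phi(E)})$, the last equality because $\Lambda\subset E\cap\phi(E)$ carries all the horseshoe's Bowen entropy. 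The hard part is the middle step: arranging the bridge slabs pairwise disjointly and verifying that the boundary embeddings of $\partial E$ and $\partial E^\prime$ are isotopic within the annular complement so that the Schoenflies/isotopy-extension machinery actually yields a homeomorphism of $E^\prime$; the hypothesis $n\geq 2$ is what makes this topological extension available.
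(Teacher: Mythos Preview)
The paper states this lemma without proof, so there is no argument in the text to compare against; the authors evidently regard the construction as standard. Your outline for building $\phi$ on $E$ --- affine on the odd $V_l$, bridge slabs in $E'\setminus E$ for the even $V_l$, then a Schoenflies/isotopy-extension fill of the remaining annulus with identity on $\partial E'$ --- is the expected folklore route and is adequate as a sketch for the existence of the homeomorphism.

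The entropy argument, however, has a real gap. Invoking the isotopy-extension theorem produces \emph{some} homeomorphism of $E'$ matching the prescribed boundary data, but gives you no control over its dynamics on $E'\setminus E$. Your assertion that a point which escapes $E$ ``wanders through the bridges without returning'' is unjustified: once an orbit lands in the annulus, the extension may well send it back into $E\setminus\phi(E)$ (which is nonempty --- it contains every $H_{i_1,\dots,i_{n-1}}$ with at least one even index), and nothing you have written prevents the annulus dynamics from having its own recurrence, or even its own positive-entropy invariant set. Consequently the inclusion $\Omega(\phi)\subset\Lambda\cup\mathrm{Fix}(\phi|_{E'\setminus E^\circ})$ does not follow from your construction as stated.

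The repair is to build the extension \emph{explicitly} so that orbits leaving $E$ drift monotonically toward $\partial E'$ and remain there. For instance, foliate the region between $\partial\phi(E)$ and $\partial E'$ by arcs and define $\phi$ on $E'\setminus E^\circ$ so that points move outward along (images of) these arcs; then every orbit that exits $E$ converges to the fixed boundary $\partial E'$, which forces $\Omega(\phi)\subset\Lambda\cup\partial E'$ and hence $h_{\text{top}}(\phi)=h_{\text{top}}(\phi|_\Lambda)$. Alternatively, start from the classical smooth Smale horseshoe, where hyperbolicity already pins down the non-wandering set, and conjugate into your cubes. Either way, the entropy identity requires a tailored extension, not a generic one handed to you by an abstract extension theorem.
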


     Inspired by the results shown in \cite{Carvalho} and \cite{VV}     to obtain a homeomorphism  $\phi:N\rightarrow N$ with upper metric mean dimension equal to $\text{dim}(N)$, 
   we present the next  lemma,  which proves for any $\alpha\in (0,n]$, there exists a homeomorphism  $\phi:\mathcal{\mathcal{C}} := [0,1]^n\rightarrow \mathcal{\mathcal{C}}$, with (upper and lower) metric mean dimension equal to $\alpha$.  
   
   \medskip 
   
   On any  {$n$-cube} $E\subseteq\mathbb{R}^{n}$, we will consider the metric inherited from $\mathbb{R}^{n}$, $\Vert \cdot\Vert$, given by $$\Vert (x_{1},\dots,x_{n})-(y_{1},\dots,y_{n})\Vert= \sqrt{(x_{1}-y_{1})^{2}+\cdots+(x_{n}-y_{n})^{2}}.$$

\begin{lemma}\label{lemma22}  Let $\phi:  {\mathcal{C}}\rightarrow  {\mathcal{C}}$ be a homeomorphism,  $E_{k}=[a_k , b_k]^{n}$ and  $E_{k}^{\prime}$   sequences of cubes such that:
\begin{itemize}\item[C1.] $ E_{k}\subset (E_{k}^{\prime})^{\circ}$ and $(E_{k}^{\prime})^{\circ}\cap (E_{s}^{\prime})^{\circ}=\emptyset$ for $k\neq s$.
\item[C2.]   $S:=\cup_{k=1}^{\infty}E^{\prime}_{k}\subseteq {\mathcal{C}}$. 
\item[C3.] each $E_{k}$ is an $n$-dimensional   $3^{k(n-1)}$-horseshoe for $\phi$;     
\item[C4.] For each $k$, $\phi|_{E_{k}^{\prime}}: E_{k}^{\prime}\rightarrow E_{k}^{\prime}$ satisfies the properties in Lemma \ref{lemma2};
\item[C5.] $\phi|_{{\mathcal{C}}\setminus S}: {\mathcal{C}}\setminus S\rightarrow  {\mathcal{C}}\setminus S$ is the identity. 
\end{itemize}
 We have:
 \begin{itemize}\item[(i)]  For a fixed $r\in (0,\infty)$, if $|E_{k}|=\frac{B}{3^{kr}}$ for each $k\in\mathbb{N}$, where $B>0$ is a constant, then $$  \emph{mdim}_{\emph{M}}({\mathcal{C}},\Vert \cdot\Vert,\phi^{2}) =\frac{n}{r+1} . $$
 \item[(ii)] If $|E_{k}|=\frac{B}{k^{2}}$ for each $k\in\mathbb{N}$, where $B>0$ is a constant, then $$  \emph{mdim}_{\emph{M}}( {\mathcal{C}},\Vert \cdot\Vert,\phi^{2}) =n . $$
 \end{itemize}\end{lemma}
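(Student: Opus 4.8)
The plan is to compute, for each scale $\varepsilon>0$, the quantity $\mathrm{span}(\phi^2,\varepsilon)$ (equivalently $\mathrm{sep}$) by exploiting the product-of-horseshoes structure. The key observation is that the dynamics of $\phi^2$ on $\mathcal C$ decomposes: outside $S$ it is the identity (contributing nothing to separated sets beyond a single orbit per $\varepsilon$-ball), and on each $E_k$ it is (a power of) a full shift on $(2k+1)^{n-1}$ — here $(2k+1) = 3^k$, so $3^{k(n-1)}$ — symbols, realized geometrically by the horseshoe. The remark just before Lemma~\ref{lemma2} records that $\phi^2$ maps each intersection $V_{l_1}\cap H_{\mathbf i^{(1)}}$ across every $V_{l_2}\cap H_{\mathbf i^{(2)}}$, so iterating $\phi^2$ we get, inside $E_k$, roughly $\big(3^{k(n-1)}\big)^m$ distinct itineraries of length $m$, and these itineraries are separated at scale comparable to the geometric size of a single leg-cell, which is $\asymp |E_k| / 3^{k} = |E_k|/(2k+1)$ in each of the $n-1$ "leg" directions (and $|E_k|/(2(2k+1)^{n-1}-1)$ in the remaining direction, which is finer, hence not the binding constraint). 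So the cell size inside $E_k$ is $\delta_k \asymp |E_k|\, 3^{-k}$.

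**The main computation** then runs as follows. Fix $\varepsilon>0$ and let $k(\varepsilon)$ be the largest $k$ with $\delta_k \gtrsim \varepsilon$, i.e. those horseshoes whose leg-cells are still resolvable at scale $\varepsilon$. For each such $k$, the horseshoe $E_k$ contributes a factor growing like $(3^{k(n-1)})^m$ to $\mathrm{span}(m,\phi^2,\varepsilon)$; since the $E_k'$ have disjoint interiors and $\phi^2$ is the identity off $S$, these factors multiply, giving
\[
\mathrm{span}(m,\phi^2,\varepsilon) \;\asymp\; \prod_{k\le k(\varepsilon)} 3^{k(n-1)m}\, \cdot\, C(\varepsilon),
\]
where $C(\varepsilon)$ is the (subexponential in $m$) number of $\varepsilon$-balls needed to cover the identity part. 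Hence $\mathrm{span}(\phi^2,\varepsilon) = \limsup_m \tfrac1m\log\mathrm{span}(m,\phi^2,\varepsilon) \asymp (n-1)\log 3 \cdot \sum_{k\le k(\varepsilon)} k = (n-1)\log 3 \cdot \tfrac{k(\varepsilon)^2}{2}(1+o(1))$. It then remains to express $k(\varepsilon)$ in terms of $|\log\varepsilon|$ in the two regimes. In case (i), $|E_k| = B\,3^{-kr}$ gives $\delta_k \asymp 3^{-k(r+1)}$, so $\delta_{k(\varepsilon)}\asymp\varepsilon$ yields $k(\varepsilon) \asymp \frac{|\log\varepsilon|}{(r+1)\log 3}$; plugging in,
\[
\frac{\mathrm{span}(\phi^2,\varepsilon)}{|\log\varepsilon|} \;\asymp\; \frac{(n-1)\log 3}{2}\cdot\frac{k(\varepsilon)^2}{|\log\varepsilon|} \;\longrightarrow\; \text{?}
\]
— wait, this would blow up; the resolution is that $\mathrm{span}(m,\phi^2,\varepsilon)$ should instead be estimated with the limit over $m$ taken correctly: the $\limsup$ over $m$ of the leg-itinerary count gives a contribution $k(n-1)\log 3$ per horseshoe (not summed blindly — one must check that the exponential growth rates add, which they do since itineraries in distinct disjoint blocks are independent), so actually $\mathrm{span}(\phi^2,\varepsilon)\asymp (n-1)\log 3\cdot \sum_{k\le k(\varepsilon)}k \asymp (n-1)\log 3\, k(\varepsilon)^2/2$, and then $\mathrm{span}(\phi^2,\varepsilon)/|\log\varepsilon| \asymp \frac{(n-1)\log3}{2}\,k(\varepsilon)^2/|\log\varepsilon|$. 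For the metric mean dimension to come out to a finite nonzero number one needs $k(\varepsilon)^2 \asymp |\log\varepsilon|$, i.e. $k(\varepsilon)\asymp\sqrt{|\log\varepsilon|}$ — which forces $|E_k|$ to decay only polynomially-in-$k$-after-a-log, i.e. the case (ii) scaling $|E_k|\asymp B/k^2$ gives $\delta_k\asymp 3^{-k}/k^2$, $|\log\delta_k|\asymp k\log 3$, so $k(\varepsilon)\asymp |\log\varepsilon|/\log 3$ and then $\mathrm{span}(\phi^2,\varepsilon)/|\log\varepsilon|\asymp\frac{(n-1)\log3}{2}\cdot\frac{k(\varepsilon)^2}{|\log\varepsilon|}\to\infty$, giving the maximal value $n$ after truncation at $n$... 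I would carefully redo these asymptotics, since the precise constants and the interplay between the $m\to\infty$ and $\varepsilon\to 0$ limits are exactly where the two different answers $\frac{n}{r+1}$ and $n$ come from.

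**The hard part**, therefore, is getting these two interlocking limits right and proving matching upper and lower bounds. For the lower bound on $\mathrm{sep}$ one builds an explicitly separated set by choosing, in each resolvable horseshoe $E_k$ and for a long time $m$, one point per leg-itinerary of length $m$, using the cross-intersection property; one checks these are $(m,\phi^2,\varepsilon)$-separated because two distinct itineraries first differ at some time $j\le m$, at which point the orbits lie in disjoint leg-cells of diameter $\gtrsim\varepsilon$. For the upper bound on $\mathrm{span}$ one covers $\mathcal C$ by a grid of mesh $\asymp\varepsilon$ and argues that an $(m,\phi^2,\varepsilon)$-orbit is essentially determined by: its itinerary through the legs of each resolvable $E_k$ (finitely many choices, growing like the product above), plus a bounded number of extra bits for the non-resolved horseshoes and the identity region. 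A clean way to organize both bounds is to invoke the known formula for entropy/metric mean dimension of a disjoint union (the entropy of $\phi^2$ restricted to $S$ equals the sup of the entropies on the $E_k'$, and the $\varepsilon$-scale entropies on each $E_k'$ are, by Lemma~\ref{lemma2} and the shift structure, $\asymp m(n-1)\log 3\cdot k$ once $\varepsilon\lesssim\delta_k$ and $0$ once $\varepsilon\gtrsim\delta_k$), so that $\mathrm{span}(\phi^2,\varepsilon)\asymp\sum_{k:\,\delta_k\gtrsim\varepsilon} k(n-1)\log 3$, and the whole problem reduces to the elementary asymptotic evaluation of this partial sum against $|\log\varepsilon|$ in the two prescribed scalings of $|E_k|$. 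I would present (i) in full and note (ii) follows by the same argument with $r$ effectively replaced by a vanishing quantity (since $k^{-2}$ decays slower than any $3^{-kr}$), pushing $n/(r+1)$ up to its supremum $n$.
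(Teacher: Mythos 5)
Your overall strategy---bound $\text{sep}$ from below by counting itineraries in the deepest horseshoe still resolvable at scale $\varepsilon$, bound $\text{span}$ from above by a covering argument, and compare both against $|\log \varepsilon_k|$---is the same as the paper's. But two computations at the core of your argument are wrong, and you leave them unresolved. First, the growth rate contributed by $E_k$ at the scale $\varepsilon_k\asymp |E_k|3^{-k}$ is $kn\log 3$, not $k(n-1)\log 3$. You count only the $3^{k(n-1)}$ legs and dismiss the subdivision into vertical strips $V_l$ as ``finer, hence not the binding constraint''; however, although the $3^{k(n-1)}$ strips are too thin to \emph{all} be resolved at scale $\varepsilon_k$, one can still select about $3^{k}$ of them that are pairwise $\varepsilon_k$-separated in the expanding coordinate (this is the sequence $s_1,\dots,s_{3^k}$ in the paper), so each time step of $\phi^2$ carries $3^{k}\cdot 3^{k(n-1)}=3^{kn}$ distinguishable symbols. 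That extra factor is precisely what produces the numerator $n$, rather than $n-1$, in $n/(r+1)$.

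Second, and more seriously, your aggregation over horseshoes rests on the false claim that ``exponential growth rates add\dots since itineraries in distinct disjoint blocks are independent.'' The $E'_k$ are pairwise disjoint \emph{invariant} sets: every orbit stays in one block, so an $(m,\phi^2,\varepsilon)$-separated set for the union has cardinality at most the \emph{sum} of the cardinalities over the blocks, and the growth rates therefore satisfy a maximum, not a sum (rates add for products of systems, not for disjoint unions). Hence $\text{span}(\phi^2,\varepsilon)\asymp \max_{k\le k(\varepsilon)} kn\log 3 = k(\varepsilon)\,n\log 3$, which is exactly how the paper's bound $\text{span}(m,\phi^2|_{Y_k},4\varepsilon)\le k\,3^{knm}/\varepsilon$ works. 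You actually notice that your sum formula makes the answer blow up, but instead of correcting it you reassert it, reverse-engineer the scaling ($k(\varepsilon)\asymp\sqrt{|\log\varepsilon|}$), and end case (ii) with ``$\to\infty$, truncated at $n$,'' which is not an argument; your closing parenthesis even states the correct rule (``equals the sup of the entropies on the $E_k'$'') and then immediately writes a sum. With the two corrections the asymptotics close as intended: in case (i), $|\log\varepsilon_k|\sim k(r+1)\log 3$ gives the ratio $n/(r+1)$; in case (ii), $|\log\varepsilon_k|\sim k\log 3+2\log k$ gives $n$ directly, with no truncation. As written, neither (i) nor (ii) is actually proved.
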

\begin{proof} We will prove (i), since (ii) can be proved  analogously. Set $\varphi=\phi^{2}$. Note that 
$$  {{\text{mdim}}_{\text{M}}}( {\mathcal{C}},\Vert \cdot\Vert,\varphi ) ={{\text{mdim}}_{\text{M}}}(S,\Vert \cdot\Vert,\varphi|_{S} ). $$

Take any $\varepsilon\in (0,1)$. For any $k\geq 1$, set $ \varepsilon_k=  \frac{|E_{k}|}{2(3^{k})-1}  =\frac{B}{(2(3^{k})-1)3^{kr}} $. There exists $k\geq 1$ such that  $\varepsilon \in [\varepsilon_{k+1}, \varepsilon_{k}]$. We have  
 \begin{equation*}   \text{sep}( n, \varphi   , \varepsilon)\geq \text{sep}( n, \varphi   , \varepsilon_{k})\geq   \text{sep}( n, \varphi  |_{E_{k}\cap \phi(E_{k})}, \varepsilon_{k}) \quad\text{for any }n\geq 1 .  \end{equation*}
Since $E_{k}$ is an $n$-dimensional $3^{k(n-1)}$-horseshoe for $\phi$, for each $k\geq 1$, consider   $${H}^{k}_{i_{1},i_{2},\dots,i_{n-1}}=[a_{k},b_{k}]\times[t_{i_{1}-1},t_{i_{1}}]\times  \cdots \times[t_{i_{n-1}-1}, t_{i_{n-1}}],\quad\text{for }i_{j}=1,\dots, 2(3^{k}) -1,$$   and  $$ V_{l}^{k}=[s_{l-1}, s_{l}]\times[a_{k},b_{k}]^{n-1},\quad\text{for }l=1,\dots, 2(3^{k})^{n-1}-1,$$ as in Definition \ref{3dimensionalhorse}.  
 For each $j=1,\dots, n-1$, let $i_{j}\in\{1,3,5,\dots,  2(3^{k})-1\}$  
 and  take $$C^{k}_{l,i_{1},\dots,i_{n-1}} = H^{k}_{i_{1},i_{2},\dots,i_{n-1}}\cap V^{k}_{l} \quad\text{for each }l\in \{1,3,\dots, 2(3^{k})^{n-1}-1\}.$$ 
For $t=1,\dots,m$, let  $i_{1}^{(t)},$ $ \dots,i_{n-1}^{(t)}\in\{1,3,\dots, 2(3^{k}) -1\}$, $l_{j_{t}}\in \{1,3,\dots, 2(3^{k})^{n-1}-1\}$  and 
set   \begin{align*} 
 C^{k}_{l_{j_{2}}, i_{1}^{(2)},\dots, i_{n-1}^{(2)},l_{j_{1}},i_{1}^{(1)},\dots, i_{n-1}^{(1)}} &=\varphi^{-1}\left[\varphi\left(C^{k}_{ l_{j_{2}},i_{1}^{(2)},\dots, i_{n-1}^{(2)}}\right)\cap C^{k}_{l_{j_{1}},i_{1}^{(1)},\dots,i_{n-1}^{(1)}}\right]\\
  &\vdots\\
 C^{k}_{l_{j_{m}}, i_{1}^{(m)},\dots,i_{n-1}^{(m)},  \dots, l_{j_{1}},i_{1}^{(1)},  i_{1}^{(1)},\dots, i_{n-1}^{(1)}} &=\varphi^{-(m-1)}\left[\varphi^{m-1}\left(C^{k}_{l_{j_{m}},i_{1}^{(m)},\dots,i_{n-1}^{(m)},  \dots,l_{j_{2}}, i_{1}^{(2)},\dots,i_{n-1}^{(2)}}\right)\cap C^{k}_{l_{j_{1}},i_{1}^{(1)},\dots,i_{n-1}^{(1)}}\right]
 \end{align*} 
 From the definition of $\varphi$, these sets are non-empty. Furthermore, we can choose a sequence $s_{1},\dots,s_{3^{k}}\in \{1,3,\dots, 2(3^{k})^{n-1}-1\}$ such that,  if $x $ and $y$ belong to different sets  $ C^{k}_{s^{(m)},i_{1}^{(m)},\dots,i_{n-1}^{(m)},  \dots,s_{(m)}, i_{1}^{(1)},\dots, i_{n-1}^{(1)}}  $, where $s^{(j)}\in \{s_{1},\dots,s_{3^{k}}\}$, 
 then $d_{m}(x,y)>\varepsilon_{k}$. Note that  we have   $3^{km}3^{k(n-1) m}= 3^{knm}$ sets of this form. Hence, 
 \begin{equation*}    \text{sep}(m, \varphi   , \varepsilon_{k}) \geq     3^{knm} \quad \text{and thus}\quad 
  \text{sep}(\varphi  , \varepsilon)\geq    \log  {3^{kn}}.
 \end{equation*}
 Therefore,  \begin{align*}\label{exxample12} {\underline{\text{mdim}}_{\text{M}}}(S ,\Vert \cdot\Vert,\varphi|_{S} )& = \liminf_{\varepsilon\rightarrow 0} \frac{\text{sep}(\varphi  ,\varepsilon )}{|\log  {\varepsilon}|} \geq \lim_{k\rightarrow \infty}\frac{\log 3^{kn}}{|\log \varepsilon_{k+1}|} \\
 &=\lim_{k\rightarrow \infty}\frac{\log 3^{kn}}{\log [(2(3^{k+1})-1)3^{(k+1)r}B^{-1}]} = \lim_{k\rightarrow \infty}\frac{\log 3^{kn}}{\log [3^{(k+1)+(k+1)r} ]} \\
 &=\frac{n}{r+1}.\end{align*}

On the other hand, note that $ \frac{\log 3^{kn}}{\log[4(2(3^{k})-1)3^{kr}B^{-1}]} \rightarrow\frac{n}{1+r}$ as $k\rightarrow \infty$. Hence, for any $\delta >0$, there exists $k_{0}\geq 1$, such that,  for any  $k> k_{0}$, we have $ \frac{\log 3^{kn}}{\log[4(2(3^{k})-1)3^{kr}B^{-1}]} <\frac{n}{1+r}+\delta$. Hence, suppose that $\varepsilon>0$ is small enough such that $\varepsilon<\epsilon_{k_{0}}$.   Set $ \tilde{\Omega}_{k}=\underset{n\in\mathbb{Z}}{\bigcap}\varphi^{n}(E_{k})$ and take $\tilde{\Omega}=\underset{k\in\mathbb{N}}{\bigcup}\Omega_{k}$.    We have $$  {{\text{mdim}}_{\text{M}}}( {\mathcal{C}},\Vert \cdot\Vert,\varphi ) ={{\text{mdim}}_{\text{M}}}(\tilde{\Omega},\Vert \cdot\Vert,\varphi|_{\tilde{\Omega}} ). $$ 
If $x\in \tilde{\Omega}$, then $x$ belongs to some $C^{k}_{l_{j_{m}},i_{1}^{(m)},\dots,i_{n-1}^{(m)},  \dots,l_{j_{1}}, i_{1}^{(1)},\dots, i_{n-1}^{(1)}}$. Take   $s^{(j_{1})},\dots s^{(j_{m})} \in \{s_{1},\dots, s_{3^{k}}\}$ with $$|s^{(j_{i})}-l_{j_{i}}|=\underset{t=1,\dots,m}{\min}|s^{(t)}-l_{j_{i}}|,\quad \text{fot each }i=1,\dots,m,$$ and  furthermore $d_{m}(x,y)\leq 4 \varepsilon_{k}$ for any $y\in C^{k}_{s^{(j_{m})},i_{1}^{(m)},\dots,i_{n-1}^{(m)},  \dots,s^{(j_{1})}, i_{1}^{(1)},\dots, i_{n-1}^{(1)}}$. Hence, if $Y_{k}=\cup_{j=1}^{k}E_{j}$, for every $ m\geq1$, we have 
  \begin{align*}    \text{span}(m,\varphi  |_{Y_{k}}, 4\varepsilon) & \leq \sum_{j=1}^{k}   \frac{3^{jnm}}{\varepsilon}  \leq k\frac{3^{knm}}{\varepsilon} .  \end{align*} 
 Therefore,  
 \begin{align*} \frac{\text{span}(\varphi  |_{Y_{k}}  ,4 \varepsilon)}{|\log 4\varepsilon|}& \leq\limsup_{m\rightarrow \infty} \frac{\log\left[  k\frac{3^{knm}}{\varepsilon} \right]}{m |\log  4\varepsilon_{k}|}  = \frac{\log 3^{kn}}{\log[4(2(3^{k})-1)3^{kr}B^{-1}]} <\frac{n}{1+r}+\delta.
 \end{align*}
This fact implies that for any $\delta >0$ we have \begin{align*}  {\overline{\text{mdim}}_{\text{M}}}(S ,\Vert \cdot\Vert,\varphi|_{S} )<\frac{n}{r+1}+\delta\quad\text{and hence }\quad {\overline{\text{mdim}}_{\text{M}}}(S ,\Vert \cdot\Vert,\varphi|_{S})\leq\frac{n}{r+1}.\end{align*}
  The above facts prove that ${ {\text{mdim}}_{\text{M}}}(\mathcal{C} ,\Vert \cdot\Vert,\varphi)=\frac{n}{r+1}. $   
\end{proof}

Given $\alpha,\beta\in[0,n]$, with $\alpha\leq \beta$,  {in the next lemma we construct an homeomorphism} $\phi:[0,1]^{n}\rightarrow [0,1]^{n}$ such that $$       \underline{\text{mdim}}_{\text{M}}([0,1]^{n} ,\Vert \cdot\Vert,\phi)=\alpha\quad \text{and}\quad   \overline{\text{mdim}}_{\text{M}}([0,1]^{n} ,\Vert \cdot\Vert,\phi)=\beta.$$

\begin{lemma}\label{med1}
For every $\alpha,\beta \in [0,n]$, with $\alpha<\beta$,   there exists  a homeomorphism $\phi_{\alpha,\beta}:[0,1]^{n}\rightarrow [0,1]^{n}$ such that $$ \alpha=     \underline{\emph{mdim}}_{\emph{M}}([0,1]^{n} ,\Vert \cdot\Vert,\phi_{\alpha,\beta})<  \overline{\emph{mdim}}_{\emph{M}}([0,1]^{n} ,\Vert \cdot\Vert,\phi_{\alpha,\beta})=\beta.$$ 
\end{lemma}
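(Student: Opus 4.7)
I will realise $\phi_{\alpha,\beta}$ as a disjoint sum of two homeomorphisms supported on two disjoint closed $n$-sub-cubes $A_1,A_2\subset[0,1]^{n}$, extended by the identity on the complement. On $A_1$ I place $\phi_1$ with $\underline{\text{mdim}}_{\text{M}}(A_1,\Vert\cdot\Vert,\phi_1)=\overline{\text{mdim}}_{\text{M}}(A_1,\Vert\cdot\Vert,\phi_1)=\alpha$, obtained directly from Lemma~\ref{lemma22}: if $\alpha=0$ take $\phi_1=\text{Id}$; if $\alpha\in(0,n)$ apply Lemma~\ref{lemma22}(i) with $r=n/\alpha-1$ and set $\phi_1$ equal to the square of the resulting homeomorphism (Lemma~\ref{lemma22}(i) computes the value for $\phi^{2}$); and if $\alpha=n$ use Lemma~\ref{lemma22}(ii). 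On $A_2$ I will build $\phi_2$ with $\overline{\text{mdim}}_{\text{M}}(A_2,\Vert\cdot\Vert,\phi_2)=\beta$ and $\underline{\text{mdim}}_{\text{M}}(A_2,\Vert\cdot\Vert,\phi_2)=0$. Since $A_1,A_2$ are disjoint with positive distance, for all sufficiently small $\varepsilon$ one has
$$
\text{sep}(n,\phi_{\alpha,\beta},\varepsilon)=\text{sep}(n,\phi_1,\varepsilon)+\text{sep}(n,\phi_2,\varepsilon)+O(\varepsilon^{-n}),
$$
whence $\text{sep}(\phi_{\alpha,\beta},\varepsilon)=\max\bigl(\text{sep}(\phi_1,\varepsilon),\text{sep}(\phi_2,\varepsilon)\bigr)$ after taking $\limsup$ in $n$.

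\textbf{Sparse-horseshoe construction of $\phi_2$.} Instead of using horseshoes $E_k$ for every $k\in\mathbb{N}$ as in Lemma~\ref{lemma22}, I use a very sparse subsequence. Fix integers $k_1<k_2<\cdots$ with $k_j/k_{j+1}\to 0$ (for instance $k_j=j!$), choose pairwise disjoint $E_{k_j}\subset(E'_{k_j})^{\circ}\subset A_2$ with
$$
|E_{k_j}| \;=\; \frac{c}{3^{k_j r_\beta}},\qquad r_\beta=\frac{n}{\beta}-1
$$
(and $|E_{k_j}|=c/k_j^{2}$ when $\beta=n$), turn each $E_{k_j}$ into an $n$-dimensional $3^{k_j(n-1)}$-horseshoe satisfying the analogues of conditions C1--C5 of Lemma~\ref{lemma22}, and set $\phi_2$ equal to the identity outside $\bigcup_j E'_{k_j}$. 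The separated-sets lower bound from the proof of Lemma~\ref{lemma22}(i) applied at $\varepsilon_{k_j}=|E_{k_j}|/(2\cdot 3^{k_j}-1)$ gives $\text{sep}(\phi_2,\varepsilon_{k_j})/|\log\varepsilon_{k_j}|\to\beta$, so $\overline{\text{mdim}}_{\text{M}}(\phi_2)\ge\beta$. Conversely, for $\varepsilon\in(\varepsilon_{k_{j+1}},\varepsilon_{k_j}]$ only horseshoes of index $\le k_j$ are ``active'', and the spanning-count estimate of Lemma~\ref{lemma22} produces $\text{span}(\phi_2,\varepsilon)\le C\log 3^{k_j n}$; after dividing by $|\log\varepsilon|$ this is at most $\beta+o(1)$ at $\varepsilon=\varepsilon_{k_j}$ and is of order $(k_j/k_{j+1})\beta\to 0$ at $\varepsilon$ just above $\varepsilon_{k_{j+1}}$, yielding both $\overline{\text{mdim}}_{\text{M}}(\phi_2)\le\beta$ and $\underline{\text{mdim}}_{\text{M}}(\phi_2)=0$.

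\textbf{Assembly.} Defining $\phi_{\alpha,\beta}$ to be $\phi_1$ on $A_1$, $\phi_2$ on $A_2$, and the identity elsewhere, the max-formula of the first paragraph gives
$$
\overline{\text{mdim}}_{\text{M}}([0,1]^{n},\Vert\cdot\Vert,\phi_{\alpha,\beta})=\max(\alpha,\beta)=\beta.
$$
For the lower mean dimension, note that $\text{sep}(\phi_1,\varepsilon)/|\log\varepsilon|\to\alpha$ (genuine convergence, since $\phi_1$'s upper and lower mdim are equal), so the $\liminf$ of the max is at least $\alpha$; along the sub-sequence of $\varepsilon$ just above $\varepsilon_{k_{j+1}}$ where $\phi_2$'s ratio tends to~$0$, the max equals $\phi_1$'s ratio, so the $\liminf$ is at most $\alpha$. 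Hence $\underline{\text{mdim}}_{\text{M}}(\phi_{\alpha,\beta})=\alpha$. I expect the main obstacle to lie in the second paragraph: one must rerun the spanning-count bound of Lemma~\ref{lemma22} in the sparse regime carefully enough to guarantee that the lower mean dimension of $\phi_2$ really drops all the way to~$0$ at the ``gap'' scales, rather than stabilising at some positive intermediate value.
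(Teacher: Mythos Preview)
Your proposal is correct and follows essentially the same route as the paper: build $\phi_{0,\beta}$ (your $\phi_2$) via a sparse subsequence of the horseshoes from Lemma~\ref{lemma22} so that the lower value drops to~$0$ while the upper value stays at~$\beta$, then juxtapose it with a copy of $\phi_\alpha$ from Lemma~\ref{lemma22} on a disjoint sub-cube and read off the result from the $\max$ of the two $\text{sep}$-ratios. The only cosmetic differences are your choice $k_j=j!$ versus the paper's $k^{k}$, and your explicit justification (using that $\phi_1$'s ratio genuinely converges) of why the $\liminf$ of the max equals~$\alpha$---a point the paper states as a max formula without comment.
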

 
\begin{proof}  
 We will prove the case $\beta<n$.   First, we will show there exists a homeomorphism $\phi_{0,\beta}:[0,1]^{n}\rightarrow [0,1]^{n}$  such that $$ 0=     \underline{\text{mdim}}_{\text{M}}([0,1]^{n} ,\Vert \cdot\Vert,\phi_{0,\beta})<  \overline{\text{mdim}}_{\text{M}}([0,1]^{n} ,\Vert \cdot\Vert,\phi_{0,\beta})=\beta.$$ 
 Fix  $r>0$ and set $ b=\frac{n}{r+1}$.  Set $a_{0}=0$ and $a_{n}= \sum_{i=0}^{n-1}\frac{C}{3^{ir}}$ for $n\geq 1$, where $C=\frac{1}{\sum_{i=0}^{\infty}\frac{1}{3^{ir}}}= \frac{3^{r}-1}{3^{r}}$. Let $E_{k}=[a_{k},b_{k}]^{n}$ and $E_{k}^{\prime}$ satisfying the conditions C1 and C2, C4 and C5 in     Lemma \ref{lemma22}, with $|E_{k}|=\frac{B}{3^{kr}}$, and such that each $E_{k^{k}}$ is an $n$-dimensional $3^{nk^{k}}$-horseshoe for $\phi_{0,\beta}$ and otherwise $\phi_{0,\beta}|_{E_{k}}$ is the identity on $E_{k}$.   We can prove    $      \underline{\text{mdim}}_{\text{M}}([0,1]^{n} ,\Vert \cdot\Vert,\phi_{0,\beta})=0$ and $  \overline{\text{mdim}}_{\text{M}}([0,1]^{n} ,\Vert \cdot\Vert,\phi_{0,\beta})=\beta.$

  {Now consider  $\alpha<\beta$}. From Lemma  \ref{lemma22}(i), we have there exists  $\phi_{\alpha}\in \text{Hom}([0,1]^{n})$  such that $$ {\underline{\text{mdim}}_{\text{M}}}([0,1]^{n} ,\Vert \cdot\Vert,\phi_{\alpha})={\overline{\text{mdim}}_{\text{M}}}([0,1] ^{n},\Vert \cdot\Vert,\phi_{\alpha})=\alpha.$$   Set $ {\phi}_{\alpha,\beta}\in \text{Hom}([0,1]^{n})$ be defined by \begin{equation*}   {\phi}_{\alpha,\beta} (x) =\begin{cases}
    T_{1}^{-1}\circ \phi_{0,\beta}\circ T_{1}(x) & \text{if }x\in [0,\frac{1}{2}]^{n},\\
     T_{2}^{-1}\circ \phi_{\alpha}\circ T_{2} (x)& \text{if }x\in [\frac{1}{2},1]^{n},\\
     x & \text{otherwise},
  \end{cases}\end{equation*}  where $T_{1}:[0,\frac{1}{2}]^{n}\rightarrow [0,1]^{n}$ and $T_{2}:[\frac{1}{2},1]^{n}\rightarrow [0,1]^{n}$ are bi-lipchitsz maps. 
 We have \begin{align*}  {\underline{\text{mdim}}_{\text{M}}}([0,1]^{n}& ,\Vert \cdot\Vert, {\phi}_{\alpha,\beta})\\
 &=\max\{{\underline{\text{mdim}}_{\text{M}}}([0,{1}/{2}]^{n} ,\Vert \cdot\Vert, {\phi}_{\alpha,\beta}|_{[0,\frac{1}{2}]^{n} }),{\underline{\text{mdim}}_{\text{M}}}([{1}/{2},1]^{n} ,\Vert \cdot\Vert, {\phi}_{\alpha,\beta}|_{[\frac{1}{2},1]^{n}})\}\\
 &=\max\{{\underline{\text{mdim}}_{\text{M}}}([0,1]^{n} ,\Vert \cdot\Vert, \phi_{0,\beta}),{\underline{\text{mdim}}_{\text{M}}}([0,1]^{n} ,\Vert \cdot\Vert,\phi_{\alpha})\}\\
 & = {\underline{\text{mdim}}_{\text{M}}}([0,1]^{n} ,\Vert \cdot\Vert,\phi_{\alpha})=\alpha .\end{align*}
 
 Analogously, we prove  $ {\overline{\text{mdim}}_{\text{M}}}([0,1]^{n} ,\Vert \cdot\Vert,\phi_{\alpha,\beta})={\overline{\text{mdim}}_{\text{M}}}([0,1]^{n} ,\Vert \cdot\Vert, \phi_{0,\beta})=\beta.$ 
 \end{proof}

\section{Homeomorphisms on manifolds  with positive metric mean dimension}\label{secvvnfnrk}

 Throughout  this section,  $N$ will   denote an $n$ dimensional   compact riemannian manifold with $n\geq 2$ and $d$ a riemannian metric on $N$.  On $\text{Hom}(N)$ we will consider the metric \begin{equation*}\label{cbenfn} \hat{d}(\phi,\varphi)=\max_{p\in N}\{d(\phi(p),\varphi(p)),d(\phi^{-1}(p),\varphi^{-1}(p))\}\quad \quad\text{ for any }\phi, \varphi \in   \text{Hom}(N).\end{equation*}
 It is well-known  $(\text{Hom}(N),\hat{d})$ is a complete metric space. 
  The main goal of this section is to prove  Theorem \ref{density_tipo1}. 
\medskip

\paragraph{\textbf{Good Charts}}
For each $p\in N$, consider the      \textit{exponential map}  $$\text{exp}_{p}: B_{\delta^{\prime}}(0_{p})\subseteq T_{p}N\rightarrow B_{\delta^{\prime}}(p)\subseteq N,$$ where $0_{p}$ is the origin in the tangent space $T_{p}N$,  $\delta^{\prime}$ is the \textit{injectivity radius} of $N$ and      $B_{\epsilon}(x)$ denote the open ball of radius $\epsilon>0$ with center $x$. 
 We will take $ {\delta}_{N}=\frac{\delta^{\prime}}{2}$.    
The exponential map has the following properties:
\begin{itemize}
\item Since $N$ is compact, $\delta^{\prime}$ does not depends on $p$.
\item $ \text{exp}_{p}(0_{p}) = p$ and $ \text{exp}_{p}[ B_{\delta_{N}}(0_{p})] = B_{\delta_{N}}({p})$;
    \item $\text{exp}_{p}: B_{\delta_{N}}(0_{p})\rightarrow B_{\delta_{N}}({p})$ is a diffeomorphism;
    \item If $v\in B_{\delta_{N}}(0_{p})$, taking $q=\text{exp}_{p}(v)$ we have $d(p,q)=\Vert v\Vert$. 
    \item The derivative of $\text{exp}_{p}$ at the origin is the identity map: $$D (\text{exp}_{p})(0) = \text{id} : T_{p}N \rightarrow  T_{p}N.$$
\end{itemize}

Since $\text{exp}_{p}: B_{\delta_{N}}(0_{p})\rightarrow B_{\delta_{N}}({p})$ is a diffeomorphism and $D (\text{exp}_{p})(0) = \text{id} : T_{p}N \rightarrow  T_{p}N,$ we have $\text{exp}_{p}: B_{\delta_{N}}(0_{p})\rightarrow B_{\delta_{N}}({p})$ is a bi-Lipschitz map with Lipschitz constant close to 1. Therefore, we can assume that if $v_{1},v_{2}\in B_{\delta_{N}}(0_{p})$, taking $q_{1}=\text{exp}_{p}(v_{1})$ and $q_{2}=\text{exp}_{p}(v_{2})$,  we have $d(q_{1},q_{2})=\Vert v_{1} -v_{2}\Vert$. Furthermore, we  will  identify  $B_{\delta_{N}}(0_{p})\subset T_{p}N$ with $B_{\delta_{N}}(0)=\{x\in\mathbb{R}^{n}:\Vert x\Vert <\delta_{N}\}\subseteq \mathbb{R}^{n}$.
\ \\

\noindent Recall that if $\alpha=\beta\in [0,n]$, then   $${H_{\alpha}^{\alpha}}(N): ={H}_{\alpha}(N)=\{\phi\in \text{Hom}(N): \underline{\text{mdim}}_\text{M}(N,  d,\phi)=\overline{\text{mdim}}_\text{M}(N,  d,\phi)=\alpha\}.$$

\begin{theorem}
\label{densitypositivemanifold}  For any $\alpha\in [0,n]$, the set $H_{\alpha}(N)  $ is   dense  in $\emph{Hom}(N)$.
\end{theorem}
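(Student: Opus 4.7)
My plan is to reduce the problem to a local construction that uses the cube homeomorphism produced by Lemma \ref{lemma22}. Given $\phi\in\text{Hom}(N)$ and $\varepsilon>0$, I seek $\psi\in\text{Hom}(N)$ with $\hat d(\psi,\phi)<\varepsilon$ and $\underline{\text{mdim}}_{\text{M}}(N,d,\psi)=\overline{\text{mdim}}_{\text{M}}(N,d,\psi)=\alpha$. The strategy has three ingredients: first approximate $\phi$ by a bi-Lipschitz homeomorphism with a fixed point, then flatten it to the identity on a small ball around the fixed point, and finally graft in the cube dynamics of mean dimension $\alpha$.

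For the first ingredient I would invoke the classical density of smooth (hence bi-Lipschitz) self-homeomorphisms in $(\text{Hom}(N),\hat d)$ to pick $\phi_1$ with $\hat d(\phi_1,\phi)<\varepsilon/4$; any bi-Lipschitz map of a compact Riemannian manifold has finite topological entropy, so $\text{mdim}_{\text{M}}(N,d,\phi_1)=0$. Because $n\geq 2$, post-composing $\phi_1$ with a bi-Lipschitz homeomorphism supported in a thin tube around a short geodesic joining $\phi_1(q)$ to a chosen point $q$ yields a bi-Lipschitz map fixing $p:=q$ and still very close to $\phi$. In the exponential chart $\exp_p$, a bi-Lipschitz cut-off $\chi$ equal to $1$ on $B_\rho(0)$ and vanishing off $B_{2\rho}(0)$ allows me to interpolate $\phi_1$ with the identity, producing $\phi_2\in\text{Hom}(N)$ that equals $\phi_1$ off $B_{2\rho}(p)$, equals the identity on $B_\rho(p)$, and remains bi-Lipschitz; since $\phi_1(p)=p$ forces $\phi_1$ to be uniformly close to the identity on $B_{2\rho}(p)$ for small $\rho$, this step costs an arbitrarily small amount in $\hat d$.

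For the third ingredient I identify $B_\rho(p)$ with a Euclidean ball via $\exp_p$, fix a small cube $\mathcal{C}=[-\eta,\eta]^n$ inside it, and let $\phi_\alpha:\mathcal{C}\to\mathcal{C}$ be the homeomorphism supplied by Lemma \ref{lemma22}(i)-(ii) (taking $\phi_\alpha=\text{id}$ when $\alpha=0$), so that $\text{mdim}_{\text{M}}(\mathcal{C},\Vert\cdot\Vert,\phi_\alpha)=\alpha$ and $\phi_\alpha|_{\partial\mathcal{C}}=\text{id}$. Define
\[
\psi(x)=\begin{cases}\exp_p\circ\phi_\alpha\circ\exp_p^{-1}(x) & \text{if } x\in \exp_p(\mathcal{C}),\\ \phi_2(x) & \text{otherwise.}\end{cases}
\]
The two definitions agree because $\phi_\alpha$ is the identity on $\partial\mathcal{C}$ while $\phi_2$ is the identity on the whole of $B_\rho(p)$, so $\psi$ is a homeomorphism of $N$ with $\hat d(\psi,\phi_2)\leq\mathrm{diam}(\exp_p(\mathcal{C}))$. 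Both $Q:=\exp_p(\mathcal{C})$ and $N\setminus Q$ are fully $\psi$-invariant and separated by positive distance, so standard additivity gives
\[
\text{mdim}_{\text{M}}(N,d,\psi)=\max\bigl\{\text{mdim}_{\text{M}}(Q,d,\psi|_Q),\ \text{mdim}_{\text{M}}(N\setminus Q,d,\psi|_{N\setminus Q})\bigr\}.
\]
Bi-Lipschitz conjugacy preserves $\text{mdim}_{\text{M}}$ and $\exp_p$ is bi-Lipschitz on $\mathcal{C}$, so the first term equals $\alpha$; the second vanishes because $\psi|_{N\setminus Q}$ is a restriction of the bi-Lipschitz map $\phi_2$ and therefore has finite entropy. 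Choosing $\eta$ and $\rho$ small enough forces $\hat d(\psi,\phi)<\varepsilon$.

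The main obstacle is maintaining bi-Lipschitz regularity through every perturbation step, so that the dynamics outside the inserted cube genuinely contribute $0$ to the metric mean dimension. The creation of the fixed point and the radial flattening to the identity must both be realised within the bi-Lipschitz class, and one must verify carefully the additivity formula for $\text{mdim}_{\text{M}}$ on fully invariant subsystems separated by a positive distance as well as the invariance of $\text{mdim}_{\text{M}}$ under bi-Lipschitz conjugacy.
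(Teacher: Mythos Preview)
Your overall plan---reduce to a local construction and graft in the cube homeomorphism from Lemma \ref{lemma22}---is exactly the paper's strategy. However, the way you localise differs from the paper in a crucial respect, and your version has a genuine gap.

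The paper does not try to manufacture a \emph{fixed} point. Instead it invokes the $C^{0}$-density of $C^{r}$-diffeomorphisms possessing a \emph{periodic} point (Artin--Mazur, Hurley), and then modifies the approximating diffeomorphism $\psi$ along the entire periodic orbit $\{p,\psi(p),\dots,\psi^{k-1}(p)\}$, using annulus-type connecting homeomorphisms $H_{i}$ between nested cubes in each exponential chart, so that the resulting map restricted to a small cube $D_{i}$ is (via $\exp$) conjugate to $\phi_{\alpha}$ and cycles $D_{0}\to D_{1}\to\cdots\to D_{k-1}\to D_{0}$. Your step ``post-compose with a bi-Lipschitz homeomorphism supported in a thin tube around a short geodesic from $\phi_{1}(q)$ to $q$'' presupposes that $d(q,\phi_{1}(q))$ can be made small for some $q$; but an irrational translation on $\mathbb{T}^{n}$ moves every point by the same positive amount, and so does every sufficiently $C^{0}$-close homeomorphism. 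Hence homeomorphisms with a fixed point are \emph{not} dense in $\text{Hom}(N)$ in general, and your first ingredient fails on such manifolds. The density of maps with a periodic point is the correct substitute, and once the period $k$ may exceed $1$ you are forced into precisely the orbit-wise gluing the paper performs.

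Two smaller points. First, the ``radial cut-off interpolation'' between $\phi_{1}$ and the identity is not automatically a homeomorphism: convex-combining two homeomorphisms through a bump function can destroy injectivity, and making this rigorous is again an annulus-extension argument of the same flavour as the paper's $H_{i}$. Second, $Q=\exp_{p}(\mathcal{C})$ and $N\setminus Q$ share the common boundary $\partial Q$, so they are not separated by positive distance; the max formula for $\text{mdim}_{\text{M}}$ still holds here (both pieces are invariant and one has zero metric mean dimension), but the justification you gave is not the right one.
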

\begin{proof} 
Let $P^{r}(N)$ be the set of $C^{r}$-diffeomorphisms on $N$ with a periodic point.  This set is $C^{0}$-dense in $\text{Hom}(N)$ (see \cite{Artin},    \cite{Hurley}).  Hence, in order to prove the theorem, it is sufficient to show that $H_{\alpha}(N)$ is dense in $P^r(N)$.   
Fix $\psi\in P^{r}(N)$
 and take any  $\varepsilon\in (0,\delta_{N})$. Suppose that $p\in N$ is a periodic point of $\psi$, with period $k$. Let $\beta>0$,  small enough, such  that $[-\beta, \beta]^n\subset  B_{\frac{\varepsilon}{4}}(0)$.  For each $i=0,\dots, k-1$, $N\setminus \psi(\exp_{\psi^{i}(p)}\partial[-\beta,\beta]^n)$  has two connected components. We denote by $C_{i}$ the connected component that contains $\psi^{i}(p)$.  Consider the positive number  
 \[\gamma=\min_{i=0,\dots, k-1}d(\psi^{i+1}(p), \psi(\exp_{\psi^{i}(p)}\partial [-\beta,\beta]^n))>0.\]
  Let $\lambda\in (0, \min\{\gamma/2, \beta\}),$ such that
   \[\exp_{\psi^{i+1}(p)} [-\lambda,\lambda]^n\subset C_{i+1}\quad\text{for each }i=0,\dots, k-1.\] The regions 
$[-\beta,\beta]^n\setminus (-\lambda, \lambda)^n $ and $ C_{i+1}\setminus \exp_{\psi^{i}(p)}(-\lambda, \lambda)^n $ are homeomorphic. For each $i=0,\dots, k-1, $ take \begin{align*}h_i: \partial [-\beta,\beta]^n&\to \exp^{-1}_{\psi^{i+1}(p)} \left(\partial  {C}_{i+1}\right)\\
u&\mapsto \exp^{-1}_{\psi^{i+1}(p)}\circ \psi\circ \exp _{\psi^{i}(p)}(u) .\end{align*} Then, there exists a homeomorphism $$H_i:   [-\beta,\beta]^n\setminus (-\lambda, \lambda)^n \to \exp^{-1}_{\psi^{i+1}(p)}[C_{{i+1}}\setminus \exp_{\psi^{i+1}(p)}(-\lambda, \lambda)^n],$$ such that 
\[H_{i}|_{\partial [-\lambda,\lambda]^n}=Id\ \ \text{and}\ \ H_{i}|_{\partial [-\beta,\beta]^n}=h_i.\]
  If $q\in R_i=\exp_{\psi^{i}(p)}  \left([-\beta,\beta]^n\setminus (-\lambda,\lambda)^n \right)$, we can write 

\[q=\exp_{\psi^{i}(p)}(z), \quad\text{for some } z\in [-\beta,\beta]^n\setminus (-\lambda,\lambda)^n. \]
Set $\varphi_{\alpha}: N\rightarrow N$, given by 
\[\varphi_{\alpha}(q)=\begin{cases}\exp_{\psi^{i+1}(p)}H_i(z), &\text{if }q\in R_i=\exp_{\psi^{i}(p)}  \left([-\beta,\beta]^n\setminus (-\lambda,\lambda)^n \right)\\
\exp_{\psi^{i+1}(p)}\phi_{\alpha}(\exp_{\psi^{i}(p)}^{-1}(q)), &\text{if }q\in D_i:=\exp_{\psi^{i}(p)}  \left([ -\lambda,\lambda]^n \right)\\
\psi(q),  & \text{otherwise}, \end{cases}\]
where  $\phi_{\alpha}\colon[ -\lambda,\lambda]^n \to [  -\lambda,\lambda]^n$ is a homeomorphism     which satisfies  $\phi_{\alpha}|_{\partial [ -\lambda,\lambda]^n }=Id$ and $$  {{\text{mdim}}_{\text{M}}}([ -\lambda,\lambda]^n  ,\Vert \cdot\Vert,\phi_{\alpha})=\alpha,$$ (see Lemma \ref{lemma22}). 
Set $K:=\cup_{i=0}^{k-1}D_{i}$. Note that $N\setminus K$ is $\varphi_{\alpha}$ invariant and     $$ \text{mdim}_\text{M}(N\setminus K ,d,\varphi_{\alpha}|_{N\setminus K})=0.$$
 Hence,   $$ \text{mdim}_\text{M}(N ,d,\varphi_{\alpha})=  \text{mdim}_\text{M}(K  ,d,\varphi_{\alpha}|_{K}).$$ 
   Note if $q\in D_{i}$, we have
$$  (\varphi_{\alpha})^{s}(q) =   \text{exp}_{\psi^{(i+s)\text{ mod }k}(p)}\circ(\phi_{\alpha})^{s}\circ\text{exp}^{-1}_{\psi^{i}(p)}(q) \quad\text{and}\quad   (\varphi_{\alpha})^{k}(q) =   \text{exp}_{\psi^{i}(p)}\circ(\phi_{\alpha})^{k}\circ\text{exp}^{-1}_{\psi^{i}(p)}(q) .$$
Hence,  $D\subseteq D_{i}$   is {an} $(s,\phi_{\alpha},\epsilon)$-{separated} set if and only if $\text{exp}_{\psi^{i}(p)}(D)\subseteq N$   is {an} $(s,\varphi_{\alpha},\epsilon)$-{separated} set for any $\epsilon>0$. Therefore,   $\text{sep}(s,\varphi_{\alpha}|_{K},\epsilon)=k\,\text{sep}(s,\phi_{\alpha} ,\epsilon).$ Therefore,  $$  \text{mdim}_\text{M}( K ,d,\varphi_{\alpha}|_{K})= \text{mdim}_\text{M}([-\lambda,\lambda]^{n} ,\Vert \cdot\Vert,\phi_{\alpha}),$$   
 which proves the theorem. 
\end{proof}
The last theorem proved Theorem \ref{density_tipo1} in the case $\alpha=\beta =n$. The proof of the general case will be a consequence of the above arguments, in fact:

  \begin{proof}[\emph{\textbf{Proof of the Theorem \ref{density_tipo1}}}] The proof  follows similarly to the proof of Theorem \ref{densitypositivemanifold}, changing $\phi_{\alpha}$  by a continuous map  $\phi_{\alpha,\beta}\colon [-\lambda, \lambda]^n \to [-\lambda, \lambda]^n$  such that 
  \begin{equation*}{\underline{\text{mdim}}_{\text{M}}}([-\lambda, \lambda]^{n} ,\Vert \cdot\Vert, {\phi}_{\alpha,\beta})=\alpha\quad\text{and}\quad {\overline{\text{mdim}}_{\text{M}}}([-\lambda, \lambda]^{n} ,\Vert \cdot\Vert,\phi_{\alpha,\beta})=\beta,\end{equation*}
   as in the Lemma \ref{med1}. 
 \end{proof}

 \section{Tipical homeomorphism has maximal metric mean dimension}\label{Section6}
  

 To complete this work, in this section we show  Theorem \ref{teoresidual*}, which was proved in Theorem A in \cite{Carvalho}, however, we will present an alternative proof of this fact using the techniques of Section \ref{section55} and Section \ref{secvvnfnrk}.

 \begin{definition}[$n$-dimensional    strong horseshoe]\label{3ddimensionalhorse} Let $E=[a,b]^{n}$ and set $|E|=b-a$. For a fixed natural number $k>1$, set  $\delta_{k}=  \frac{b-a}{4k+1}$ and $\epsilon_{k}=\frac{b-a}{2(2k+1)^{n-1}-1}$.  For $i=0,1, 2, \dots, 4k+1$, set $t_i= a+i\delta_{k} $ and consider   $$H_{i_{1},i_{2},\dots,i_{n-1}}=[a,b]\times[t_{i_{1}-1},t_{i_{1}}]\times  \cdots \times[t_{i_{n-1}-1}, t_{i_{n-1}}],$$ for $i_{j}\in\{1,\dots, 4k+1\}.$  For $l=0,1, 2, \dots, 2(2k+1)^{n-1}-2$, set $s_l= a+l\epsilon_{k} $ and consider   $$ V_{l+1}=[s_l, s_{l+1}]\times [a,b]^{n-1}.$$ 
  We say that $E\subseteq A\subseteq \mathbb{R}^{n}$ is an     $n$-\textit{dimensional   strong   $(\epsilon,(2k+1)^{n-1})$-horseshoe} for a homeomorphism $\phi:A\rightarrow A$ if $|E|>\epsilon$ and furthermore:  
\begin{itemize} \item For  any $H_{i_{1},i_{2},\dots,i_{n-1}}$, with $i_{j}\in\{1,3,\dots, 4k+1\}$, there exists     $l\in\{1,3,\dots, 2(2k+1)^{n-1}-1\}$ with $$H_{i_{1},i_{2},\dots,i_{n-1}}\subseteq \phi(V_{l})^{\circ}.$$ 
\item For any $l=2,4,\dots, 2(2k+1)^{n-1}-2$,  $\phi(V_{l})\subseteq A\setminus E$.\end{itemize} 
  \end{definition}
 
  \begin{remark}
  Note if $\epsilon^{\prime}>\epsilon$, then any $n$-dimensional strong $(\epsilon^{\prime},(2k+1)^{n-1})$-horseshoe for $ \phi$ is an $n$-dimensional strong $(\epsilon,(2k+1)^{n-1})$-horseshoe for $ \phi$.
  \end{remark}
  
Using local charts, the last   definition can be done on the manifold $N$.

\begin{definition} Let $N$ be an $n$-dimensional riemannian manifold and fix $k\geq 1$. We say that $\phi\in \text{Hom}(N)$ has an  $n$-\textit{dimensional strong} $( \epsilon, (2k+1)^{n-1})$-\textit{horseshoe} $E=[a,b]^{n}$,  if there is $s$ and an exponential charts $\text{exp}_{i}: B_{\delta_{N}}(0)\rightarrow N$, for $i=1,\dots,s$,   such that: \begin{itemize} \item $\phi_{i}=\text{exp}_{(i+1)\text{mod}\,  s}\circ \phi \circ \text{exp}^{-1}_{i}: B_{\delta}(0 )\rightarrow B_{\delta_{N}}(0)$ is well defined for some $\delta \leq \delta_{N}$;
 \item $E$ is an $n$-dimensional  strong $(\epsilon, (2k+1)^{n-1})$-horseshoe for $\phi_{i} $. \end{itemize}  
 To simplify the notation, we will set $\phi_{i}=\phi$ for each $i=1,\dots, s$. \end{definition}
 
\noindent  For $\epsilon >0$ and $k\in\mathbb{N}$, we consider the sets 
\begin{align*} &H(\epsilon, k)=\{\phi^{2} \in \text{Hom}(N):  \phi \text{ has a strong }(\epsilon,k) \text{-horseshoe} \}  \\
&H(k)=\bigcup_{i\in\mathbb{N}}H\left( \frac{1}{ i^{2}},3^{ki(n-1)} \right)\\
 & \mathcal{H}=\overset{\infty }{\underset{k=1}{\bigcap}} H(k). \end{align*}  
 \begin{lemma}
The set $\mathcal{H}$ is residual. 
 \end{lemma}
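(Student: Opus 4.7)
The plan is to apply the Baire category theorem in the complete metric space $(\text{Hom}(N),\hat{d})$: I will show that for each $k\geq 1$ the set $H(k)=\bigcup_{i\in\mathbb{N}}H(1/i^2,3^{ki(n-1)})$ contains a $C^0$-open dense subset of $\text{Hom}(N)$, so that $\mathcal{H}=\bigcap_{k=1}^{\infty}H(k)$ is residual.

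For openness, the key point is that having a strong $(\epsilon,j)$-horseshoe is a $C^0$-open condition on a homeomorphism. All defining inclusions of Definition \ref{3ddimensionalhorse}, namely $H_{i_1,\ldots,i_{n-1}}\subset\phi(V_l)^\circ$ for odd $l$ and $\phi(V_l)\subseteq A\setminus E$ for even $l$, are of ``compact-in-open'' type and therefore persist under any sufficiently small $C^0$-perturbation of $\phi$: the sets $V_l$ are compact, $\phi(V_l)^\circ$ and $A\setminus E$ are open, and $\phi^{-1}$ varies continuously with $\phi$ in the metric $\hat d$. Combined with the continuity of the squaring operation $\phi\mapsto\phi^2$, this makes $H(\epsilon,j)$ open, and hence $H(k)$ is open as a union of open sets.

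For density, fix $\psi\in\text{Hom}(N)$ and $\varepsilon>0$. By the density of $C^r$-diffeomorphisms with periodic points invoked in the proof of Theorem \ref{densitypositivemanifold}, I may assume $\psi$ has a periodic point $p$ of some period $m$. Following the exponential-chart technique of that proof, I modify $\psi$ in a small neighborhood of the orbit $\{p,\psi(p),\ldots,\psi^{m-1}(p)\}$ to produce a homeomorphism $\phi$ that coincides with $\psi$ outside this neighborhood and, inside one of the charts centered at a point of the orbit, carries a strong $(1/i^2,3^{ki(n-1)})$-horseshoe on a cube $E=[a,b]^n$. This local horseshoe insertion is exactly the construction from Section \ref{section55} (Lemma \ref{lemma2}), transplanted to the manifold via $\exp_{\psi^j(p)}$. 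By choosing $i$ large enough that the horseshoe fits inside a ball of radius less than $\varepsilon/2$ around $p$, and using $\phi|_{\partial E'}\equiv\text{Id}$ from Lemma \ref{lemma2}, I obtain $\hat{d}(\phi,\psi)<\varepsilon$ together with $\phi^2\in H(1/i^2,3^{ki(n-1)})\subseteq H(k)$, proving density.

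The principal obstacle is to coordinate scales in the density step: the definition of a strong horseshoe requires the cube side length to satisfy $|E|>1/i^2$, while the $C^0$-smallness of the perturbation demands $|E|<\varepsilon$ (in fact, the entire cube must fit in an $\varepsilon$-ball of a point of the orbit). Both are simultaneously achievable by taking $i$ sufficiently large, since one can make $|E|$ arbitrarily small and still satisfy $1/i^2<|E|<\varepsilon$; the number of legs $3^{ki(n-1)}$ grows, but this only enriches the combinatorics of the horseshoe and does not enlarge its spatial footprint. Once openness and density of each $H(k)$ are established, the Baire category theorem delivers the residuality of $\mathcal{H}$.
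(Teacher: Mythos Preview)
Your approach is essentially the paper's: show each $H(\epsilon,k)$ is open via the robustness of the strong-horseshoe inclusions, show each $H(k)$ is dense by inserting a horseshoe near a periodic orbit exactly as in Theorem~\ref{densitypositivemanifold}, and conclude by Baire. You supply considerably more detail than the paper's terse ``clearly open'' and its one-line appeal to Theorem~\ref{densitypositivemanifold}, and your discussion of the scale-matching issue ($1/i^{2}<|E|<\varepsilon$) is a useful clarification.

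One small wrinkle: the phrase ``combined with the continuity of the squaring operation $\phi\mapsto\phi^{2}$, this makes $H(\epsilon,j)$ open'' does not do what you want. Continuity yields that \emph{preimages} of open sets are open, not images; if $H(\epsilon,k)$ is read literally as $\{\phi^{2}:\phi\text{ has a strong horseshoe}\}$, then it is the image of an open set under squaring and need not be open. This difficulty is really inherited from the paper's ambiguous notation (writing $\phi^{2}$ as the generic element of $H(\epsilon,k)$). Under the natural reading---that $H(\epsilon,k)$ consists of those $\phi$ possessing the strong horseshoe, with the $\phi^{2}$ merely anticipating the later notation $\varphi=\phi^{2}$---openness follows directly from your compact-in-open argument, and no squaring is needed. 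With that reading your density argument should also conclude $\phi\in H(k)$ (rather than $\phi^{2}\in H(k)$), which is exactly what your construction gives.
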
 
 \begin{proof}
  Clearly, for any $\epsilon\in (0,\delta_{N})$ and $k\in\mathbb{N}$, the set  $ H(\epsilon, k)$  is open and nonempty.  \\
We claim that the set $H(k)$ is dense in $\text{Hom}(N)$. In fact: fix  $\psi\in P^{r}(N)$ with a $s$-periodic point. In the same way as the proof of the Theorem \ref{densitypositivemanifold},  every small  neighborhood of the orbit of this point can be perturbed in order to obtain a strong $\left(\frac{1}{i^{2}},3^{n k i}\right)$   horseshoe for   a $\phi$ such that $\phi^{2}$ be close to $\psi$ for a large enough $i$. Thus $H(k)$ is a dense set, and  \emph{a fortiori}   $\mathcal{H}=\overset{\infty }{\underset{k=1}{\bigcap}} H(k) $  is residual in $\text{Hom}(N)$.
\end{proof}
Finally, we prove  Theorem \ref{teoresidual*}.
\begin{proof}[\emph{\textbf{Proof of the Theorem \ref{teoresidual*}}}]

It is sufficient to prove that $\overline{\text{mdim}}_{\text{M}}(N,d,\phi)=n$ for any $\phi\in \mathcal{H}$.  For this sake, take $\varphi=\phi^{2}\in \mathcal{H}$. We have $\varphi \in H(k)$ for any $k\geq 1$. Therefore, for any $k\in\mathbb{N}$, there exists   $i_{k}$, with $i_{k}<i_{k+1}$,  such that $\phi $ has a strong  $\left( \frac{1}{i_{k}^{2}},3^{n\,k\, i_{k}} \right)$-horseshoe $E_{{k}}=[a_{k},b_{k}]^{n}$, such that $|E_{{k}}|>\frac{1}{i_{k}^{2}}$, consisting of   rectangles  $$H^{i_{k}}_{j_{1},j_{2},\dots,j_{n-1}}=[a_{k},b_{k}]\times[t^{{k}}_{j_{1}-1},t^{{k}}_{j_{1}}]\times  \cdots \times[t^{{k}}_{j_{n-1}-1}, t^{{k}}_{j_{n-1}}], \quad\text{with}\quad j_{t}\in\{1,3,\dots, 2(3^{k \,i_{k}})-1\},$$ and $$V^{{k}}_{l}=[s^{k}_{l-1}, s^{k}_{l}]\times[a_{k},b_{k}]^{n-1},\quad\text{    for  }l\in\{1,3,\dots, 2( {3^{k i_{k}(n-1)}})-1\},$$  with $$H^{{k}}_{j_{1},j_{2},\dots,j_{n-1}}\subseteq \phi(V^{{k}}_{l})^{\circ}\quad\text{for some } l.$$ 
   For $i_{j}\in\{1,\dots,  2(3^{ki_{k}})-1\}$ and $l=1,\dots, 2(3^{ki_{k}})^{n-1}-1,$ 
 take $$E^{k}_{l,i_{1},\dots,i_{n-1}}   =[s^{k}_{l-1},s^{k}_{l}]\times[t^{k}_{i_{1}-1},t^{k}_{i_{1}}]\times  \cdots \times[t^{k}_{i_{n-1}-1}, t^{k}_{i_{n-1}}] .$$  
For $t=1,\dots,m$, let  $i_{1}^{(t)},$ $ \dots,i_{n-1}^{(t)}\in\{1,3,\dots, 2(3^{ki_{k}})-1\}$  and for $l_{t}\in\{1,\dots, 2(3^{ki_{k}})^{n-1}-1\},$        set   \begin{align*} C^{k}_{l_{1}, i_{1}^{(1)},\dots, i_{n-1}^{(1)}}& =   E^{k}_{l_{1},i_{1}^{(1)},\dots,i_{n-1}^{(1)}}\\
 C^{k}_{l_{2},i_{1}^{(2)},\dots, i_{n-1}^{(2)},l_{1},i_{1}^{(1)},\dots, i_{n-1}^{(1)}} &=\varphi^{-1}\left[\varphi\left(C^{k}_{l_{2}, i_{1}^{2},\dots, i_{n-1}^{2}}\right)\cap E^{k}_{l_{1},i_{1}^{(1)},\dots,i_{n-1}^{(1)}}\right]\\
  &\vdots\\
 C^{k}_{l_{m},i_{1}^{(m)},\dots,i_{n-1}^{(m)},  \dots,l_{1}, i_{1}^{(1)},\dots, i_{n-1}^{(1)}} &=\varphi^{-(m-1)}\left[\varphi^{m-1}\left(C^{k}_{l_{m},i_{1}^{(m)},\dots,i_{n-1}^{(m)}, \dots,l_{2}, i_{1}^{(2)},\dots,i_{n-1}^{(2)}}\right)\cap E^{k}_{l_{1},i_{1}^{(1)},\dots,i_{n-1}^{(1)}}\right]
 \end{align*}  Furthermore,  set $$ \tilde{C}^{k}_{l_{m},i_{1}^{(m)},\dots,i_{n-1}^{(m)},  \dots,l_{1}, i_{1}^{(1)},\dots, i_{n-1}^{(1)}} :=\text{exp}\left[ C^{k}_{l_{m},i_{1}^{(m)},\dots,i_{n-1}^{(m)},  \dots,l_{1}, i_{1}^{(1)},\dots, i_{n-1}^{(1)}} \right].$$ 
For any $k\geq 1$,   take  $\varepsilon_{k}=\frac{1}{4i_{k}^{2}(2(3^{k \,i_{k} })-1)}$. We can choose at least $3^{ki_{k}}$ sub-indices $p_{1},\dots,p_{3^{ki_{k}}}$ in $\{1,\dots,  2(3^{ki_{k}})^{n-1}-1\}$ such that if $x\in \tilde{C}_{p_{s}, i_{1},\dots, i_{n-1}}$ and $y\in \tilde{C}_{p_{t}, i_{1},\dots, i_{n-1}}$, with $s\neq t$, then $d_{1}(x,y)>\varepsilon_{k}$. 
    Hence, if $x $ and $y$ belong to different sets  $  \tilde{C}^{k}_{p^{(m)},i_{1}^{(m)},\dots,i_{n-1}^{(m)},  \dots,p^{(1)}, i_{1}^{(1)},\dots, i_{n-1}^{(1)}} ,$ with    $p^{(t)}\in\{p_{1},\dots,p_{3^{ki_{k}}}\}$ and  $i_{1}^{(t)},$ $ \dots,i_{n-1}^{(t)}\in\{1,3,\dots, 2(3^{ki_{k}})-1\}$, we have that $d_{m}(x,y)>\varepsilon_{k}$.   Hence,
 \begin{equation*}    \text{sep}(m, \varphi  , \varepsilon_{k}) \geq     3^{ki_{k}nm} \quad \text{and thus}\quad 
  \text{sep}(\varphi   , \varepsilon_{k})\geq    \log  {3^{nki_{k}}}.
 \end{equation*}
 Therefore,  \begin{align*}\label{exxample12} {\overline{\text{mdim}}_{\text{M}}}(N ,d,\varphi  )& = \limsup_{k\rightarrow \infty} \frac{\text{sep}(\varphi  ,\varepsilon_{k} )}{|\log  {\varepsilon_{k}}|} \geq \lim_{k\rightarrow \infty}\frac{ \log 3^{nki_{k}}}{|\log \varepsilon_{k}|} =\limsup_{k\rightarrow \infty}\frac{\log 3^{nki_{k}}}{\log [4i_{k}^{2}(2(3^{ki_{k}})-1)]}  =n.\end{align*}   
  This fact   proves the theorem, since for any $\psi \in \text{Hom}(N)$, the inequality $   \overline{\text{mdim}}_{\text{M}}(N ,d ,\phi)\leq n$  always holds (see \cite{VV}, Remark 4).
\end{proof}
 
\section{Declarations}

\noindent \textbf{Ethical Approval}

\noindent This declaration is not applicable.
 
 \medskip
\noindent \textbf{Competing interests}  

\noindent We have no conflict of interest.

 \medskip
\noindent \textbf{Authors' contributions}  

\noindent All authors contributed equally to writing the manuscript.

 \medskip
\noindent \textbf{Funding} 

\noindent We were not financed.

 \medskip
\noindent \textbf{Availability of data and materials}

\noindent This declaration is not applicable.

 \end{document}